\newtheorem{theorem}{Theorem}[section]
\newtheorem{lemme}[theorem]{Lemma}
\newtheorem{prop}[theorem]{Proposition}
\newtheorem{cor}[theorem]{Corollary}
\theoremstyle{definition}
\newtheorem{definition}[theorem]{Definition}
\newtheorem{remark}[theorem]{Remark}
\newtheorem{ex}[theorem]{Example}
\def\<{\langle}
\def\>{\rangle}
\def\a{\alpha}
\def\b{\beta}
\def\te{\theta}
\def\z{{\zeta}}
\def\te{\theta}
\def\Te{\Theta}
\def\C{{\mathbb C}}
\def\N{{\mathbb N}}
\def\R{{\mathbb R}}
\def\Z{{\mathbb Z}}
\def\Q{{\mathbb Q}}
\def\H{{\mathcal H}}
\def\B{{\mathcal B}}
\def\M{{\mathcal M}}
\def\O{{\mathcal O}}
\def\dist{\mathop{\rm dist}\nolimits}
\def\sys{\mathop{\rm sys}\nolimits}
\def\ln{\operatorname{ln}}
\def\vol{\operatorname{vol}}
\def\Vol{\operatorname{Vol}}
\def\V{\operatorname{Vol}}
\def\mass{\operatorname{mass}}
\def\T{{\mathbb T}}
\def\1{\mathbf 1}
\def\N{{\mathbb N}}
\subjclass{Primary 53C20, 53C99, 30B50; Secondary 05C35}
\keywords{Homological length spectrum, Dirichlet series, stable
norme.}
\thanks{}
\begin{document}

\title{On systolic zeta functions}

\author{Ivan Babenko and Daniel Massart}

\thanks{Partially supported by the grants RFSF 10-01-00257-a, and ANR Finsler.}

\address{Institut Montpelli\'erain Alexander Grothendieck,  CNRS,  Univ. Montpellier, France}
 \email{ivan.babenko@umontpellier.fr,
daniel.massart@umontpellier.fr}

\maketitle

\begin{abstract}
We define Dirichlet type  series associated with homology length
spectra of Riemannian, or Finsler, manifolds, or polyhedra, and
investigate some of their analytical properties. As a consequence
we obtain an inequality analogous to Gromov's classical
intersystolic inequality, but taking the whole homology length
spectrum into account rather than just the systole.
\end{abstract}

\section{Introduction}
Let $(M, g)$ be a closed  $m$-dimensional Riemannian manifold. Let $l$ be
the shortest possible length of a closedl geodesic whose homology
class is non-trivial in $H_1(M,\R)$. This length is called
homological systole of $(M, g)$ and denoted by  $l =
\text{sys}_H(M,g)$. It carries important information about the
manifold $(M,g)$. As was discovered by Gromov (\cite{Gromov83}),
under topological conditions on M to be explained below, the
following inequality holds for all Riemannian metrics g on M :

\begin{equation}\label{eq:equation.de.base}
\sigma_m\big{(}\text{sys}_H(M, g)\big{)}^m \leq \vol(M, g),
\end{equation}
where $\sigma_m$ is a universal constant only depending on the
dimension $m$, whose optimal value is unknown. Now let us explain
the topological condition under which Inequality  (1) holds.
Recall that for any manifold M there is a map, called {\it
characteristic map}, unique modulo homotopy,
\begin{equation}\label{eq:application.caracteristique.bis}
f: M \longrightarrow \T^{b_1}
\end{equation}
where $b_1=b_1(M)$ is the first Betti number of M and $T^{b_1}$ is
the $b_1$-dimensional torus, such that the induced map
$$
f_*: H_1(M, \Z)/\text{Tors} \longrightarrow H_1(T^{b_1}, \Z),
$$
where Tors means the torsion subgroup of $H_1(M,\Z)$, is an
isomorphism of $\Z$-modules.

 Let  $\a[M]$ denote $ f_*([M])\in
H_m(M, \Z_{\star})$,where  $[M]$ is the fundamental class of  $M$
and $\Z_{\star} = \Z$ when $M$ is orientable and $\Z_2$ otherwise.
A sufficient condition for (\ref{eq:equation.de.base}) to hold
for any metric  $g$ is that $\a[M]\neq 0$. It can be shown to be
also necessary.

In this paper, instead of just the homological systole, we shall
consider what we call the {\it homology length spectrum} of $(M,
g)$. For any $\te \in H_1(M,\Z)/\text{Tors}$, denote by $l_{\te}$
the smallest possible length of a closed geodesic in the homology
class $\te$. Denote by $\Te= H_1(M, \Z)/\text{Tors} \setminus \{0\}$
the set of non-trivial homology classes and define the homology
length spectrum of $(M, g)$ as $L_{\Te} = L_{\Te}(g) =
\{l_{\te}\}_{\te \in \Te}$. Of course,  $\text{sys}_H(M, g) = \inf
L_{\Te}$. The goal of this paper is to extract geometric and
topological information from the homology length spectrum as it
was done from the systole. The information carried by the
homological length spectrum $L_{\Te}$ is conveniently encoded in
the formal series

\begin{equation}\label{eq:zeta.bis}
\z_{sys}(z) = \mathop{\sum}\limits_{\te \in
\Te}\frac{1}{l_{\te}^z}
\end{equation}
which we call {\it systolic zeta function} of $(M, g)$. In Chapter
\ref{ch:zeta} we show that the formal series (\ref{eq:zeta.bis})
converges in a natural sense and enjoys interesting analytical
properties.

To explain our main results we need to define the stable norm of
$(M, g)$. Any metric $g$ on $M$ induces a norm $\|\cdot\|_{st}^g$
on $H_1(M, \R)$ called stable norm (see \cite{Federer74} and
Chapter \ref{ch:zeta} for more detail). Denote by $B_g(1)$  the
unit ball of the stable norm. Let $V (g)$  be the volume of
$B_g(1)$ with respect to the Haar measure on $H_1(M, \R)$,
normalised in such a way that the quotient of $H_1(M, \R)$ by the
lattice $H_1(M, \Z)/\text{Tors}$  has volume 1.

\begin{theorem}\label{th:A}
For any Riemannian manifold $(M, g)$, the series
(\ref{eq:zeta.bis})  converges for all  $z$ such that
$\text{Re}(z) > b_1(M)$, and diverges when $Re(z) < b_1(M)$. The
function $\z_{sys}(z)$ is holomorphic in the half-plane
$\text{Re}(z) > b_1(M)$.

\noindent Furthermore the  function $\z_{sys}(z)$ extends
analytically to the half-plane  $\text{Re}(z) > b_1-1$, with a
simple pole at $z=b_1$ with residue $Res_{b_1}(\z_{sys}(z))
=b_1(M)V(g)$.
\end{theorem}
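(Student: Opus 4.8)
The plan is to compare the systolic zeta function with the \emph{stable-norm zeta function}
\[
\z_{st}(z)\ :=\ \sum_{\te\in\Te}\frac{1}{\norm{\te}_{st}^{\,z}},
\]
which is more tractable, since it involves only the lattice $\La:=H_1(M,\Z)/\text{Tors}$ and the convex body $B_g(1)$. The bridge between the two is the two-sided estimate
\[
\norm{\te}_{st}\ \le\ l_{\te}\ \le\ \norm{\te}_{st}+C\qquad(\te\in\Te),
\]
valid for a constant $C=C(M,g)$. The left inequality is immediate from $l_{n\te}\le n\,l_{\te}$ (traverse a shortest $\te$-loop $n$ times) together with $\norm{\te}_{st}=\lim_{n}l_{n\te}/n$. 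The right inequality --- every integral class carries a closed geodesic exceeding the stable norm by at most a fixed amount --- is the one genuinely geometric ingredient, in the spirit of Burago's theorem on periodic metrics, and is where the Riemannian hypothesis is really used; I would establish or invoke it first.

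Granting the comparison, the convergence statement is soft. Since $l_{\te}\ge\norm{\te}_{st}$, the series $\z_{sys}$ converges absolutely wherever $\z_{st}$ does; and since $l_{\te}\le 2\norm{\te}_{st}$ for all but finitely many $\te$, any point where $\z_{st}$ diverges is a point where $\z_{sys}$ diverges. So it suffices to locate the abscissa of convergence of $\z_{st}$, and this is a standard lattice-point count: writing $N(R):=\#\{\te\in\La:\norm{\te}_{st}\le R\}$, the facts that $B_g(1)$ is a bounded symmetric convex body and that $\La$ has covolume $1$ give
\[
N(R)\ =\ V(g)\,R^{b_1}\ +\ O\!\left(R^{\,b_1-1}\right)\qquad(R\to\infty),
\]
the error counting lattice points in the boundary layer of $R\cdot B_g(1)$, a region of Haar volume $O(R^{b_1-1})$ by convexity. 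Hence $N(R)\asymp R^{b_1}$, so $\z_{st}$ --- and therefore $\z_{sys}$ --- converges precisely for $\RE z>b_1$ and diverges for $\RE z<b_1$; holomorphy on $\RE z>b_1$ is the usual local-uniform convergence of a Dirichlet series.

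For the analytic continuation I would feed the asymptotics of $N(R)$ into a Mellin transform. Discarding the finitely many $\te$ with $\norm{\te}_{st}<1$ (which contribute an entire function), partial summation (Stieltjes integration by parts) gives, for $\RE z>b_1$,
\[
\z_{st}(z)\ =\ z\int_{1}^{\infty}N(R)\,R^{-z-1}\,dR\ +\ (\text{entire}).
\]
Inserting $N(R)=V(g)R^{b_1}+E(R)$ with $E(R)=O(R^{b_1-1})$, the main term becomes $V(g)\,z\int_{1}^{\infty}R^{\,b_1-z-1}\,dR=\dfrac{V(g)\,z}{z-b_1}$, which is meromorphic on all of $\C$ with a simple pole at $z=b_1$ of residue $b_1 V(g)$; the remainder $z\int_{1}^{\infty}E(R)\,R^{-z-1}\,dR$ is an absolutely convergent integral, hence holomorphic, on the larger half-plane $\RE z>b_1-1$. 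So $\z_{st}$ continues to $\RE z>b_1-1$ with exactly the asserted pole and residue. Finally, the comparison yields
\[
\frac{1}{l_{\te}^{\,z}}-\frac{1}{\norm{\te}_{st}^{\,z}}\ =\ -\,z\!\int_{\norm{\te}_{st}}^{l_{\te}}t^{-z-1}\,dt\ =\ O\!\left(|z|\,\norm{\te}_{st}^{\,-\RE z-1}\right),
\]
so $\z_{sys}(z)-\z_{st}(z)$ is dominated by $|z|\,C\sum_{\te\in\Te}\norm{\te}_{st}^{\,-\RE z-1}<\infty$ (using $\RE z+1>b_1$) and converges locally uniformly on $\RE z>b_1-1$, defining a holomorphic function there. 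Thus $\z_{sys}=\z_{st}+(\text{holomorphic on }\RE z>b_1-1)$ inherits the continuation, the simple pole at $z=b_1$, and the residue $b_1(M)\,V(g)$.

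The main obstacle is the upper comparison $l_{\te}\le\norm{\te}_{st}+C$. A mere quasi-isometry $l_{\te}\le C'\norm{\te}_{st}$ already suffices for the convergence half of the statement, but the additive $O(1)$ control is exactly what lets the pole and residue transfer from $\z_{st}$ to $\z_{sys}$; without it the remainder series need not converge up to $\RE z=b_1-1$. Everything else --- the lattice count with its $O(R^{b_1-1})$ error term, the Mellin/Abel manipulation, and the holomorphy of the remainder series --- is routine once that estimate is in hand.
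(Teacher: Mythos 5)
Your proposal is correct and follows essentially the same route as the paper: the additive comparison $\big|l_{\te}-\|\te\|_{st}\big|\le C$ (Burago's lemma, extended beyond the torus case by Cerocchi--Sambusetti), the lattice-point count $N(R)=V(g)R^{b_1}+O(R^{b_1-1})$ for the stable-norm ball, and the observation that $\z_{sys}-\z_{st}$ is holomorphic on $\RE(z)>b_1-1$, so the pole and residue transfer. The only internal variation is that you continue $\z_{st}$ directly by partial summation/Mellin inversion of $N(R)$, whereas the paper compares $\z_{st}$ with the integer-frequency series $F(z)=\sum_n a(n)n^{-z}$ and invokes a Tauberian theorem to locate the pole and residue; your version is, if anything, slightly more self-contained.
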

This result comes as a particular case of Theorem
\ref{th:zeta.syst.A}, proven in Chapter \ref{ch:proprietes.zeta}, which holds
for any Riemannian or Finsler  polyhedron.

The analytical properties of $\z_{sys}(z)$ are reflected in the
homology length spectrum, which enables us to prove an inequality
analogous to (\ref{eq:equation.de.base}). Furthermore our approach
allows us to estimate the constants that appear in the inequality.

\begin{cor}\label{th:B}
Let $M$ be  an orientable $m$-dimensional differentiable manifold
with first Betti number $b_1$. For any Riemannian metric $g$ on
$M$ we have the following inequality :

$$
\bigg{(}\frac{b_1v_{b_1}V[M]}{Res_{b_1}(\z_{sys}(z))}\bigg{)}^m
\leq \big{(}\vol(M, g)\big{)}^{b_1}.
$$
\end{cor}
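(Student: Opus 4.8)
The plan is to combine Theorem 1.1 with an elementary extremal estimate relating the residue at $z = b_1$ to the volume $V(g)$ of the stable norm ball, and then with the classical isoperimetric-type inequality linking $V(g)$ to $\vol(M,g)$. First I would invoke the last assertion of Theorem 1.1, which gives $Res_{b_1}(\z_{sys}(z)) = b_1 V(g)$, so the left-hand side of the desired inequality can be rewritten as
\[
\left( \frac{b_1 v_{b_1} V[M]}{Res_{b_1}(\z_{sys}(z))} \right)^m = \left( \frac{v_{b_1} V[M]}{V(g)} \right)^m.
\]
Here $v_{b_1}$ presumably denotes the Euclidean volume of the unit ball in $\R^{b_1}$ and $V[M]$ is a topological invariant (the "simplicial" or "systolic" volume attached to the characteristic map, normalized so that $V[M] = 1$ in the simplest cases, $\alpha[M]\ne 0$). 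So the corollary reduces to showing
\[
\left( \frac{v_{b_1} V[M]}{V(g)} \right)^m \le (\vol(M,g))^{b_1},
\qquad\text{equivalently}\qquad
\frac{(v_{b_1} V[M])^{m}}{(\vol(M,g))^{b_1}} \le V(g)^{m}.
\]

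The heart of the matter is therefore a lower bound for $V(g)$, the volume of the stable-norm unit ball, in terms of $\vol(M,g)$ and the topological data. The key step is the following chain: by the definition of the stable norm and the characteristic map $f\colon M \to \T^{b_1}$, one can estimate the $b_1$-dimensional stable systolic data of $M$ from below, and then bound $V(g)$ using the fact that a large volume of $M$ must "spread" the image $f_*[M]$ over the torus, forcing the stable norm ball to be large. Concretely, I expect to use a coarea / pullback argument: $f$ has a well-controlled Lipschitz constant with respect to the stable norm on the target $\T^{b_1} = H_1(M,\R)/(H_1(M,\Z)/\mathrm{Tors})$, and pulling back the volume form of the torus and integrating gives $\vol(M,g) \ge c \cdot (\text{something involving } V(g)^{-1})$, where the constant absorbs $v_{b_1}$ and $V[M]$. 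This is exactly the mechanism by which Gromov's inequality \eqref{eq:equation.de.base} is usually obtained (via the stable norm and Minkowski's theorem), but here we keep track of the full ball volume $V(g)$ rather than only its "width" (the systole).

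The main obstacle, and the step requiring the most care, is getting the constant right — i.e., proving the sharp inequality with precisely $v_{b_1} V[M]$ on the left, not merely up to a dimensional fudge factor. This needs two ingredients: (1) a clean statement that $\vol(M,g) \cdot V(g) \ge (\text{degree-like quantity}) \cdot v_{b_1}$, which should follow from an optimal isosystolic inequality on the torus $\T^{b_1}$ (here the torus case is where constants are actually known) combined with the fact that the characteristic map $f$ has degree $V[M]$ onto the relevant homology class; and (2) checking that raising to suitable powers $m$ and $b_1$ and rearranging gives exactly the asserted form. I would treat (1) by decomposing: first bound the $b_1$-volume of the image torus below by $v_{b_1}$ over the volume of its stable ball (a Loewner/Gromov-type inequality on flat-homotopy tori), then relate that to $\vol(M,g)$ via the area formula applied to $f$ and the fact that $\|\cdot\|^g_{st}$ is by construction the infimal such norm making $f$ distance-nonincreasing. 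Once this inequality $\vol(M,g)\cdot V(g)\ge v_{b_1}V[M]$ is in hand, taking both sides to appropriate powers and substituting $Res_{b_1}(\z_{sys}) = b_1 V(g)$ finishes the proof; I would verify the final algebra explicitly, since the asymmetry between the exponents $m$ and $b_1$ is where a careless manipulation could slip.
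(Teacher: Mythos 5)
Your first step---substituting $Res_{b_1}(\z_{sys}(z)) = b_1 V(g)$ from Theorem \ref{th:A} to reduce the corollary to $(v_{b_1}V[M])^m \le V(g)^m\,(\vol(M,g))^{b_1}$---is exactly how the paper proceeds, and the closing algebra is indeed routine. The gap is in everything after that. First, the constants are not what you take them to be: $v_{b_1}$ is not the Euclidean volume of the unit ball in $\R^{b_1}$, but the universal constant of Section \ref{ch:isoperimetrique}, $v_b=\inf_B \Vol(B)/v_{supp}(B)$, arising from the problem of the smallest parallelepiped containing a centrally symmetric convex body; and $V[M]$ is the algebraic volume defined via bases subordinate to $\a(M)=f_*([M])$, which coincides with $|\deg(f)|$ only when $m=b_1$. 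Second, the intermediate inequality you propose to establish, $\vol(M,g)\cdot V(g)\ge v_{b_1}V[M]$, cannot hold in general: under the rescaling $g\mapsto\lambda^2 g$ its left-hand side scales like $\lambda^{m-b_1}$ while the right-hand side is a topological constant, so it is scale-invariant only when $m=b_1$. What is actually needed (and what the desired statement is equivalent to) is the scale-invariant inequality $v_{b_1}V[M]\le V(g)\,(\vol(M,g))^{b_1/m}$.

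Third---and this is the heart of the matter---you do not prove that inequality; you gesture at a coarea/pullback argument through the characteristic map and an ``optimal isosystolic inequality on the torus,'' but no argument producing the precise constant $v_{b_1}V[M]$ is supplied, and the sharp torus-type inequalities you allude to are not of the required form. The paper's route is different: it introduces the asymptotic homology volume $\Omega_H(M,g)=\lim_{t\to\infty}\vol(V_g(q,t),\widetilde{g})/t^{b_1}$ of the abelian cover, observes the identity $\Omega_H(M,g)=V(g)\,\vol(M,g)$, and then invokes Theorem 5.2 of \cite{Babenko92}, which states precisely that $v_{b_1}V[M]\le \Omega_H(M,g)\,(\vol(M,g))^{(b_1-m)/m}$. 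That external theorem---whose proof is the genuinely nontrivial convex-geometric input involving the constant $v_b$ and the algebraic volume---is what your sketch would have to replace, and nothing in your proposal supplies it.
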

Here $V [M]$  is a topological invariant called algebraic volume,
which is defined in Chapter \ref{ch:isoperimetrique}. In the
particular case $m = b_1$ it is just  the degree of the
characteristic map: $V[M]=|\text{deg}(f)|$. The number $v_{n}$ is
a universal constant which comes from the solution of an
optimization problem in $n$-dimensional convex geometry. Although
the exact value of $v_{n}$ is unknown when $n \geq 3$, there are
explicit estimates which are asymptotically correct when $n$ goes
to infinity. See Chapter  \ref{ch:isoperimetrique} for more detail
and the proof of Corollary \ref{th:B}.

The classical Riemann zeta function $\z(z) =
\mathop{\sum}\limits_{n=1}^{\infty}{1\over n^z}$ appears naturally
in the previous context  as $1/2$ times the systolic zeta  function of
the simplest manifold, the circle $S^1$. In this case for a metric
$g$ of unit volume (or  length) we have
$$
\z_{sys}(z) = 2\z(z).
$$
Another well known number theoretic zeta function is the so called
Hurwitz zeta function
\begin{equation}\label{eq:hurwitz.zeta}
\z(z; q) = \mathop{\sum}\limits_{n=0}^{\infty}{1\over (q+n)^z},
\end{equation}
where $q$  is a complex parameter with  Re$(q) > 0$. See \cite{Apostol76}
for details and analytic properties as well as connections of
Hurwitz zeta functions for rational parameter $q$ with  Dirichlet
$L$-functions.

 Hurwitz zeta functions  appear naturally in the systolic
setting as well. More precisely in Chapter \ref{ch:zeta.stable} we
define the stable systolic zeta functions for Riemannian
polyhedra. Weighted graphs are the simplest examples. A graph is
called  combinatorial if the weight of each edge is equal to 1.

In Chapter \ref{ch:zeta.stable},  the  stable systolic zeta function
for a combinatorial graph is explicitly calculated as a linear
combination of Hurwitz zeta functions, see Theorem
\ref{th:zeta.graphe} for a precise statement. Such a
representation of the stable systolic zeta function for a graph
provides an analytic extension of the stable systolic zeta
function to the whole  complex plane as a meromorphic function,
see Chapter \ref{ch:zeta.stable} for details.

All the  results in this paper are presented in the more familiar
Riemannien context. Nevertheless,  up to Chapter 6,  all of them can
be directly translated to the Finsler setting. Moreover a stable
systolic zeta function  can be seen as a Finsler zeta function  on the corresponding
flat finsler torus. To be expressed in the Finsler language, the
results of Chapter 6 would require more precise statements and a
suitable choice of   Finsler volume.


\section{Systolic zeta function}\label{ch:zeta}

Let us consider a countable set $\Te$ and a map $l: \Te
\longrightarrow \R_+$. Let
$$
L_{\Te} = \{l_\te\}_{\te \in \Te}
$$
(or $L$ for short) be the  graph of this map, seen as a family of
positive numbers, not necessarily pairwise disjoint, indexed by
$\Te$. We say that  $L_{\Te}$ is a  $\Te$-{\it marked spectrum}.
If we order  $L_{\Te}$ in a natural way, taking multiplicities
into account, we get the ordered spectrum
$$
|L_{\Te}| = \{(l_i, a_i)\}_{i=1}^{\infty} , \ \ l_1 < l_2 < \dots,
$$
where the $a_i \in \N$ are the respective multiplicities of the
elements $l_i, i \in \N$. Let us call  {\it  zeta function } of
the family $L$ the formal series
\begin{equation}\label{eq:zeta}
\z_L(z) = \mathop{\sum}\limits_{\te \in \Te}\frac{1}{l_{\te}^z}.
\end{equation}

Denote
$$
\Te_{\leq t} = \{\te \in \Te \big{|} l_{\te} \leq t\} \ \
\text{and} \ \ \Te_{>t} = \Te \setminus \Te_{\leq t}.
$$
Let us say that the series  (\ref{eq:zeta}) converges for some  $z
\in \C$ if the sum
$$
\mathop{\sum}\limits_{\te \in \Te_{\leq t}}\frac{1}{l_{\te}^z}
$$
 is finite for all $t > 0$ and has a limit when
$t\longrightarrow \infty$. Observe that  this notion of cenvergence
coincides with the {\it greedy convergence} introduced in
\cite{Shchepin11}, and that the convergence of the series
(\ref{eq:zeta}) is equivalent to the convergence of the classical
Dirichlet series
$$
\z_{|L|}(z) =
\mathop{\sum}\limits_{i=1}^{\infty}\frac{a_i}{l_i^z},
$$
where $(l_i, a_i)$ are the elements of the marked spectrum
$|L_{\Te}|$. This explains several analytical properties of the
function  (\ref{eq:zeta}). It is easily seen that if the
cardinality of  $\Te_{\leq t}$ satisfies
\begin{equation}\label{eq:2.convergence}
|\Te_{\leq t}| = \O(t^b)
\end{equation}
then the series (\ref{eq:zeta}) converges in the half-plane $Re(z)
> b$ and the function  $\z_L(z)$ is holomorphic therein.

\begin{ex}\label{ex:zeta.systolique}
Let us consider a finite simplicial polyhedron $P$ and set
\begin{equation}\label{eq:3.Omega.syst}
\Te = \big{\{}(H_1(P, \Z)/\text{Tors}) \setminus \{0\}\big{\}}.
\end{equation}
Let  $g$ be a polyhedral Riemannian metric on $P$ (see e.g.
\cite{Babenko02}),  and for any  $\te \in \Te$ let us define
$l_{\te}$ as the shortest possible length of a closed geodesic in
the homology class $\te$. The family $L(P,g) = L_{\Te}$ is
well-defined and represents the homology length spectrum of $(P,
g)$. Denote by $\z_{\sys(P,g)}(z)$ (or $\z_{\sys}(z)$ for the sake of
brevity) the corresponding series   (\ref{eq:zeta}) for  $L_{\Te}$.

\end{ex}
\begin{definition}
The function  $\z_{\sys}(z)$ defined by  (\ref{eq:zeta}) for the
family $L(P,g)$ is called  {\it systolic zeta function} of the
Riemannian polyhedron $(P, g)$.
\end{definition}


\section{Stable zeta function}\label{ch:zeta.stable}


Let  $\B = (\R^b, \|\cdot \|)$ be a Banach space, and let $\Gamma
\subset \R^b$ be a lattice. Let us normalise the Haar measure on
$\R^b$  so that $\Gamma$ has volume 1. Set $\Te = \Gamma \setminus
\{0\}$ et $L_{\Te} = \{l_{\te} = \|\te\|, \te \in \Te\}$, then
$\Te_{\leq t} = B(t) \cap \Te$ where $B(t)$ is the ball of radius
$t$  centered at $0$ in $\B$. Denote $\z_{\B}(z)$ the corresponding
zeta function.

For instance,  if  $\B_{\nu} = (\R^2, \|\cdot \|_{\nu})$ with  $\nu
= \{1 , \infty\}$, then $\z_{\B_{\infty}}(z) = 2\z_{\B_1}(z) =
8\z(z-1)$ where  $\z$ is Riemann's zeta function. In higher dimensions
$\z_{\B_{\nu}}(z)$, for $\nu = \{1 , \infty\}$, may also be
expressed in terms of  $\z$ but we shall not dwell on this topic.
In general  $\z_{\B}(z)$ enjoys the following properties.

\begin{prop}\label{prop:zeta_B}
Let  $\B = (\R^b, \|\cdot \|)$ be a Banach space, $\Gamma \subset
\R^b$ a lattice, and  $V = \V (B(1))$  the  volume of the unit
ball of  $\B$. Then the function  $\z_{\B}(z)$ satisfies the following: 

\noindent 1) the series (\ref{eq:zeta})  converges for all $z$
such that $\text{Re}(z) > b$ and diverges if $Re(z) < b$.

\noindent 2) the function $\z_{\B}(z)$ extends holomorphically to
the half-plane $\text{Re}(z) > b-1$, with a simple pole at $z=b$,
with residue $Res_b(\z_{\B}(z)) = bV$.
\end{prop}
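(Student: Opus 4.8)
The plan is to reduce the statement to the lattice--point counting function
$$N(t) := |\Te_{\le t}| = \#\{\gamma \in \Gamma\setminus\{0\}:\ \|\gamma\| \le t\},$$
and then to read the analytic behaviour of $\z_{\B}$ off a Mellin--type integral representation. \textbf{Step 1 (counting).} Since $B(1)$ is a symmetric convex body and $\Gamma$ has covolume $1$, the number of points of $\Gamma$ in the dilate $tB(1)$ equals $\vol(tB(1))$ up to an error controlled by the number of fundamental cells meeting $\partial(tB(1))$, which is $\O(t^{b-1})$; hence
$$N(t) = Vt^{b} + R(t), \qquad |R(t)| \le Ct^{b-1}\quad (t\ge 1),$$
while $N(t) = 0$ for $t$ below the systole $s_1 = \min_{\gamma\ne 0}\|\gamma\|$. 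In particular $|\Te_{\le t}| = \O(t^{b})$, so by the remark following (\ref{eq:2.convergence}) the series (\ref{eq:zeta}) converges and $\z_{\B}$ is holomorphic on $\RE(z)>b$, which is half of 1).

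\textbf{Step 2 (integral representation).} For $\RE(z)>b$, integration by parts in the Stieltjes integral $\int_{0}^{\infty} t^{-z}\,dN(t)$ (the boundary terms vanishing because $N$ is $0$ near $0$ and $N(t)t^{-z}\to 0$ at infinity) gives
$$\z_{\B}(z) = \sum_{\gamma\in\Te}\|\gamma\|^{-z} = z\int_{0}^{\infty} N(t)\,t^{-z-1}\,dt.$$
Splitting the integral at $t=1$ and inserting $N(t)=Vt^{b}+R(t)$ on $[1,\infty)$ yields
$$\z_{\B}(z) = z\int_{0}^{1} N(t)\,t^{-z-1}\,dt \;+\; \frac{zV}{z-b}\;+\; z\int_{1}^{\infty} R(t)\,t^{-z-1}\,dt.$$

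\textbf{Step 3 (continuation and pole).} The first summand is entire, being a finite integral of a bounded step function. The middle summand $zV/(z-b)$ is meromorphic on $\C$ with a single simple pole at $z=b$ of residue $bV$. By the bound $|R(t)|\le Ct^{b-1}$, the last integral converges absolutely and locally uniformly in the half--plane $\RE(z)>b-1$, hence is holomorphic there. Therefore $\z_{\B}$ extends holomorphically to $\RE(z)>b-1$ apart from a simple pole at $z=b$, with $Res_{b}(\z_{\B}(z))=bV$, which proves 2).

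\textbf{Step 4 (divergence for $\RE(z)<b$).} Since the multiplicities $a_{i}$ are non--negative, convergence of the associated Dirichlet series $\z_{|L|}$ at a point forces (absolute) convergence on the whole half--plane to its right. So if $\z_{\B}$ converged at some $z_{0}$ with $\RE(z_{0})<b$, it would converge absolutely at some real $\sigma\in(\RE(z_{0}),b)$; but for such $\sigma$ the lower bound $N(t)\ge ct^{b}$ for large $t$ forces $\sum_{\gamma\in\Te_{\le t}}\|\gamma\|^{-\sigma}$ to grow at least like $t^{\,b-\sigma}\to\infty$, a contradiction. I expect the only genuinely delicate point to be Step~1 --- namely securing the $\O(t^{b-1})$ error in the lattice--point count for the merely convex (not necessarily smooth) ball $B(1)$; the remaining steps are standard manipulations of Dirichlet integrals.
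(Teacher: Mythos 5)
Your argument is correct, and it reaches the conclusion by a somewhat different route than the paper. Both proofs rest on the same key input, the lattice-point count $|B(t)\cap\Gamma| = Vt^{b} + \O(t^{b-1})$ (which the paper, like you, asserts as standard; your remark about cells meeting $\partial(tB(1))$ is as much justification as the paper offers). The difference is in the analytic machinery: you work directly with the counting function $N(t)$ via Abel/Stieltjes summation, obtaining the Mellin-type representation $\z_{\B}(z) = z\int_0^\infty N(t)\,t^{-z-1}dt$ and then splitting off the main term, so that the continuation to $\operatorname{Re}(z)>b-1$, the simple pole at $z=b$, and the residue $bV$ all drop out of the single explicit formula $zV/(z-b)$ plus a manifestly holomorphic remainder. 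The paper instead discretizes: it rounds the frequencies to integers, forming $F(z)=\sum_n a(n)n^{-z}$ with $a(n)=|(B(n)\setminus B(n-1))\cap\Gamma|$, shows $\z_{\B}-F$ is holomorphic on $\operatorname{Re}(z)>b-1$ by a Mean Value Theorem estimate, and then gets the pole and residue of $F$ from a cited theorem on Dirichlet series (Tenenbaum). Your version is more self-contained (no comparison series, no external Tauberian-type citation) and makes the dependence on the error term transparent; the paper's version reduces everything to classical Dirichlet series with integer exponents, which matches the technique it reuses later for combinatorial graphs, where the stable norms genuinely are integers. Your divergence argument for $\operatorname{Re}(z)<b$ (non-negative coefficients plus the lower bound $N(t)\geq ct^{b}$) is also spelled out more completely than the paper's one-line assertion, and is sound; the only unstated micro-detail is choosing the real comparison point $\sigma$ positive, which is automatic since $b\geq 1$.
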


\begin{proof}
Consider the following integer sequence
$$
\Big{\{}a(n) = \Big{|}(B(n)\setminus B(n-1)) \cap
\Gamma\Big{|}\Big{\}}_{n=1}^{\infty}
$$
and set
$F(z)=\mathop{\sum}\limits_{n=1}^{\infty}\frac{a(n)}{n^z}$. It is
easily seen that the series  $\z_{\B}(z)$ et $F(z)$ convverge in
the same half-plane. Since
\begin{equation}\label{eq:3.A(t)}
A(t) = \mathop{\sum}\limits_{n \leq t}a(n) = \Big{|}B(t)\cap
\Gamma\Big{|} = Vt^b + \O(t^{b-1}),
\end{equation}
the common half-plane of convergence is $\{ z| Re(z) > b\}$ and $z
= b$ is singular for both functions. This entails 1).

Moreover we have
$$
\z_{\B}(z) - F(z) = \mathop{\sum}\limits_{n=1}^{\infty}
\bigg{(}\mathop{\sum}\limits_{\te \in (B(n)\setminus B(n-1)) \cap
\Gamma}\Big{(}\frac{1}{\|\te\|^z} - \frac{1}{n^z}\Big{)}\bigg{)}.
$$
Applying, for $z \in \R$, the Mean Value Theorem to the function
$$
\begin{array}{rcl}
f_z : \  \R_{+}^{*} & \longrightarrow & \C \\
t & \longmapsto & t^{-z}
\end{array}
$$
we get that, for all $t$, there exists $h(t) \in \left]t, E(t)+1
\right[$, where $E(.)$ is the floor function, such that
$$
f_z(E(t))-f_z(t) = f'_z(h(t)) (1 - \{t\})
$$
where $\{x\}$ is the fractional part of $x$.

Then we have, for  $z \in \R$,
\begin{equation}\label{eq:4}
\z_{\B}(z) - F(z) = z\mathop{\sum}\limits_{n=1}^{\infty}
\bigg{(}\mathop{\sum}\limits_{n-1<\| \te
\|<n,}\frac{1-\{\|\te\|\}}{h(\te)^{z+1}}\bigg{)},
\end{equation}
with  $\|\te\| < h(\te) <[\|\te\|]+1$. Therefore the series
(\ref{eq:4}) converges if  $z > b-1$, thus it is holomorphic in
the half-plane $\text{Re}(z) > b-1$.

Taking  (\ref{eq:3.A(t)}) into account, it remains to apply
Tauber's Theorem (see e.g. \cite{Tenenbaum95}, th\'eor\`eme 2,
{\S}7.1)  to the function $F(z)$. It follows that $F(z)$  extends
to $\text{Re}(z) > b-1$ and
$$
F(z) = \frac{bV + o(1)}{z - b}, \ \ \text{si} \ \ z
\longrightarrow b.
$$
This ends the proof.
\end{proof}
If  $(P, g)$ is a Riemannian polyhedron, let us set $\Gamma =
H_1(P, \Z)/\text{Tors}$. Then  $\Gamma$ embeds canonically  into
$H_1(P, \R)$.We set $\Te(P)=\Gamma \setminus \{0\}$. It is
well known that for all $\te \in \Gamma$ the following limit
\begin{equation}\label{eq:lim.stable}
\mathop{\lim}\limits_{n\rightarrow\infty}\frac{l_{n\te}}{n}
\end{equation}
exists  and defines a norm on $\Gamma$ which extends by
homogeneity and continuity to  $H_1(P, \R)$. This norm is called
{\it mass} or {\it stable norm}, we denote it $\|\cdot \|^g_{st}$
(or $\|\cdot \|_{st}$ for the sake of brevity). See
\cite{Federer74} for several equivalent definitions.

Applying the contruction at the beginning of this section to the
Banach space $\B_{st} = (H_1(P, \R), \|\cdot \|^g_{st})$ we get
the function
\begin{equation}\label{eq:zeta.stable}
\z_{st}(z) = \mathop{\sum}\limits_{\te \in
\Te(P)}\frac{1}{\|\te\|_{st}^z}
\end{equation}
which we call  {\it stable zeta function } of the Riemannian polyhedron
$(P, g)$. Thus it verifies Proposition \ref{prop:zeta_B}.

Weighted graphs are interesting examples of Riemmannian polyhedra
of  dimension 1. Such a graph  is given by a set of vertices $V$,
a set of edges $E$ (we allow loops and multiple edges between two
given vertices, and edges are unoriented), and a weight function
 $w : E \longrightarrow \R_+$. We set $P=(V,E)$, so the weighted graph is $(P,w)$.
Each edge is then endowed with a Riemannian metric such that the
length of an edge $e$ is $w(e)$. Then the weighted graph  $(P,w)$
becomes a  1-dimensional Riemannian polyhedron. We say that  $(P,
w)$ is a {\it combinatorial graph} if $w(e)=1$ for all $e \in E$.
The  only topological invariant of $P$ is its first Betti number
$b = b_1(P) = |E| - |V| + 1$.

The stable norm on $H_1(P, \R)$ only depends on the weight
function $w$ on $P$ and not on the particular choice of the
Riemannian metric  on each edge. Its unit ball $B_{st}^w $ is
always a  $b$-dimensional polytope whose vertices are in
one-to-one correspondance with the simple cycles in $P$, and their
number is bounded above by  $2(2^b - 1)$, see \cite{Babenko06} for
more detail.

In particular, for any weighted graph  $(P, w)$ the stable zeta
function $\z_{st}^w(z)$ satisfies Proposition \ref{prop:zeta_B}.
For a general weight function $w$ we cannot expect additional
analytical properties for  $\z_{st}^w(z)$. However our next
theorem shows that  $\z_{st}^w(z)$ does have interesting
analytical properties when $(P,w)$ is a combinatorial graph. In
the statement, $\zeta(s)$ means, as usual, Riemann's zeta function.

\begin{theorem}\label{th:zeta.graphe}
Let  $P$ be a combinatorial graph with first Betti number  $b$,
and let $V = \V (B_{st})$ be the  volume of the unit ball of the
stable norm in  $H_1(P, \R)$. Then the stable zeta function
$\z_{st}(z)$ of the graph $P$ satisfies: 

\noindent 1) the series (\ref{eq:zeta.stable})  converges  for all
$z$ such that  $\text{Re}(z) > b$ and diverges if  $Re(z) < b$.

\noindent 2) the function  $\z_{st}(z)$ extends analytically to
the whole complex plane $\C$ as a meromorphic function with simple
poles at
 $z=1, 2, \dots , b$, whose residue at
 $z=b$ is $Res_b(\z_{st}(z)) = bV$.

\noindent 3) there exists  $m=m(P) \in \N$ such that
\begin{equation}\label{eq:zeta.graphe.1}
\z_{st}(z) = bV\zeta(z - b +1) +
\mathop{\sum}\limits_{l=0}^{b-1}m^{l-z}\bigg{(}\mathop{\sum}\limits_{k=1}^m
p_{lk}\zeta\Big{(}z-l; {k\over m}\Big{)}\bigg{)},
\end{equation}
where  $\{p_{lk}\} \in \Q$ and $ \zeta(s; q)$ is the Hurwitz zeta function 
defined in (\ref{eq:hurwitz.zeta}).

\end{theorem}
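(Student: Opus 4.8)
The plan is to reduce the stable zeta function of a combinatorial graph to an explicit finite sum of Dirichlet series indexed by the integer points of the stable norm ball, and then to recognize each such series as a Hurwitz zeta function. The starting point is the structural fact, recalled just before the theorem, that for a combinatorial graph the unit ball $B_{st}$ of the stable norm on $H_1(P,\R)$ is a rational polytope whose vertices correspond to simple cycles. Because the edges all have weight $1$, the stable norm of a class $\te \in \Gamma = H_1(P,\Z)/\text{Tors}$ takes the form $\|\te\|_{st} = \max_j |\langle \te, \xi_j\rangle|$ for a finite family of linear functionals $\xi_j$ with rational (in fact, denominator-$m$) coefficients, where $m = m(P)$ is a common denominator for the vertices of the dual polytope $B_{st}^\ast$. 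Consequently $m\|\te\|_{st} \in \N$ for every $\te \in \Gamma$, and the idea is to split $\Te(P)$ according to the cone of $B_{st}$ in which $\te$ lies (equivalently, according to which facet of $B_{st}$ the ray $\R_+\te$ crosses), so that on each cone the stable norm is a \emph{single} linear functional and counting lattice points at a given norm value becomes a quasi-polynomial (Ehrhart-type) count.

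First I would fix a triangulation of $\partial B_{st}$ into rational simplicial cones $C_0,\dots,C_N$ with apex at the origin, chosen fine enough that on each $C_\a$ the function $\te \mapsto \|\te\|_{st}$ agrees with a single integral-linear functional $\ell_\a$ after rescaling by $m$, i.e. $m\|\te\|_{st} = L_\a(\te)$ with $L_\a : \Gamma \cap C_\a \to \N$ linear. For each cone and each residue $k \bmod m$ and each "slab" index, the number of $\te \in \Gamma \cap C_\a$ with $L_\a(\te) = n$ is, by Ehrhart reciprocity / the theory of lattice-point quasi-polynomials, a quasi-polynomial in $n$ of degree $b-1$ with period dividing $m$; write it as $\sum_{l=0}^{b-1} q_{\a,l,k}\, n^l$ on the arithmetic progression $n \equiv k \pmod m$, with $q_{\a,l,k} \in \Q$. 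Summing over $\te$ with $\|\te\|_{st}^{-z} = m^z L_\a(\te)^{-z}$ and regrouping, the contribution of all cones to $\z_{st}(z)$ becomes
\[
\z_{st}(z) = \sum_{l=0}^{b-1}\sum_{k=1}^{m} p_{lk}\, m^{l-z}\sum_{\substack{n\ge 1\\ n\equiv k\,(m)}} \frac{n^l}{n^{z}} \cdot \frac{1}{\text{(normalization)}}
\]
with $p_{lk} \in \Q$ obtained as rational-linear combinations of the $q_{\a,l,k}$. Re-indexing $n = k + mj$, $j\ge 0$, converts the inner sum into exactly $m^{l-z}\sum_j (j + k/m)^{z-l}$-type terms, i.e. $\zeta(z-l; k/m)$ up to the factor $m^{l-z}$ already displayed, which is the announced form save for the leading term.

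The leading term needs separate handling because the top-degree ($l=b-1$, or rather the genuine volume term) piece of the lattice-point count is governed by $V = \V(B_{st})$ and produces the dominant pole: by Proposition \ref{prop:zeta_B} (which applies to $\z_{st}(z)$ since $\B_{st}$ is a Banach space with a lattice) the residue at $z=b$ is $bV$, and matching asymptotics forces the leading contribution to be $bV\zeta(z-b+1)$, since $\zeta(s)$ has residue $1$ at $s=1$ and thus $bV\zeta(z-b+1)$ has residue $bV$ at $z=b$; subtracting it leaves a sum whose counting functions have strictly smaller growth, which is exactly the Hurwitz remainder with $l$ running only up to $b-1$. Part 1) is immediate from Proposition \ref{prop:zeta_B}. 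For part 2), once the identity (\ref{eq:zeta.graphe.1}) is established, the meromorphic continuation to all of $\C$ and the location of the simple poles at $z = 1,2,\dots,b$ follow from the classical fact that $\zeta(s;q)$ continues meromorphically with a single simple pole at $s=1$ of residue $1$: the term $bV\zeta(z-b+1)$ contributes the pole at $z=b$ with residue $bV$, and $\zeta(z-l;k/m)$ contributes a simple pole at $z = l+1$ for $l = 0,\dots,b-1$, i.e. at $z = 1,\dots,b$, with the potential pole at $z=b$ from the $l=b-1$ terms combining with the leading term (its residue must vanish after the subtraction, since the total residue at $z=b$ is already accounted for by $bV$).

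The main obstacle is the combinatorial bookkeeping of the first two steps: producing a decomposition of $\Gamma \setminus\{0\}$ into finitely many pieces on each of which the stable-norm level sets are counted by an honest quasi-polynomial of period $m$, and doing so in a way that the regrouping over $\te$ is absolutely convergent for $\RE(z) > b$ (so that rearrangement is legitimate) and yields \emph{rational} coefficients $p_{lk}$. Concretely this requires (a) knowing that $B_{st}$ is a rational polytope — guaranteed by the cited description of its vertices as simple cycles — so that a common denominator $m$ exists; (b) a clean lattice-point count, for which inclusion–exclusion over the faces of a rational cone decomposition, together with Ehrhart quasi-polynomiality, suffices; and (c) care at cone boundaries and at the origin, handled by standard inclusion–exclusion so that each $\te$ is counted exactly once. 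None of these is deep, but assembling them so that the final expression has precisely the stated shape (only $l \le b-1$ in the Hurwitz part, the clean $bV\zeta(z-b+1)$ leading term) is where the work lies; the analytic conclusions in 1)--3) are then formal consequences of Proposition \ref{prop:zeta_B} and the known analytic theory of the Hurwitz zeta function.
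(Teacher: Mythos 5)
There is a genuine gap, and it is concentrated exactly where your writeup gets vague. The crux of the paper's proof is the fact (quoted from \cite{Babenko06}) that for a \emph{combinatorial} graph the stable norm is integer-valued on the lattice: $\|\te\|_{st}\in\N$ for all $\te\in\Te(P)$. You only use the weaker statement $m\|\te\|_{st}\in\N$ coming from rationality of (the dual of) $B_{st}$. With that weaker input your bookkeeping does not produce the stated identity: writing $\|\te\|_{st}^{-z}=m^{z}L_\a(\te)^{-z}$ introduces a factor $m^{z}$ which you sweep into an unspecified ``(normalization)''; carried through honestly, your cone-by-cone Ehrhart count yields an expression of the form $\sum_{l,k}c_{lk}\,\zeta(z-l;k/m)$ with \emph{constant} rational coefficients, whereas (\ref{eq:zeta.graphe.1}) has the $z$-dependent prefactor $m^{l-z}$, i.e.\ it is a Dirichlet series over the integers with $m$-periodic coefficients. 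These are genuinely different shapes (compare frequencies $n/m$ versus $n$), and the passage from one to the other is precisely the integrality of the norm. The same problem infects your leading term: in your normalization the dominant contribution is $m^{z-b}\,bV\zeta(z-b+1)$, not $bV\zeta(z-b+1)$, and the argument that ``matching asymptotics/residues forces the leading contribution to be $bV\zeta(z-b+1)$'' is not a proof — infinitely many functions have residue $bV$ at $z=b$, so residue matching determines nothing beyond the residue.

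With the integrality fact in hand, the paper's route is also much lighter than your cone decomposition: since $\|\te\|_{st}\in\N$ and $B_{st}(n)=nB_{st}$ is a dilate of a rational polytope, one writes $\z_{st}(z)=\sum_{n\ge 1}A_n n^{-z}$ with $A_n=|nB_{st}\cap\Gamma|-|(n-1)B_{st}\cap\Gamma|$, applies Ehrhart's theorem directly to get $A_n=bVn^{b-1}+p(n)$ with $p$ an $m$-quasipolynomial of degree $\le b-1$ with rational coefficients, and expands the periodic coefficients in the indicator basis $\a_k$ to obtain exactly the Hurwitz terms with the factor $m^{l-z}$; no triangulation into cones, no inclusion–exclusion at cone boundaries. (Your observation that the residue of the Hurwitz part at $z=b$ must vanish because Proposition \ref{prop:zeta_B} already accounts for the residue $bV$ is a legitimate variant of the paper's telescoping argument $\sum_k p_{b-1}(k)=0$, but it only becomes available after the identity (\ref{eq:zeta.graphe.1}) is actually established, which is the step your proposal does not secure.)
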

\begin{proof}
Part 1) follows directly from Proposition \ref{prop:zeta_B}.
We prove 3) before 2). If $P$ is a  combinatorial graph,  for all
$\theta \in \Theta(P)$, $\|\theta\|_{st} \in \N$  (see
\cite{Babenko06}), so we have
\begin{equation}\label{eq:zeta.graphe.2}
\z_{st}(z) = \mathop{\sum}\limits_{\theta \in \Theta}{1\over
\|\theta\|_{st}^z} = \mathop{\sum}\limits_{n =
1}^{\infty}{A_n\over n^z} ,
\end{equation}
where  $A_n$ is the number of  points in  $\Theta$ with  norm $n$.
If $B_{st}(t)$ is the ball of radius $t$,  since   the norm of any
$\theta \in \Theta(P)$ is an integer,  we get
\begin{equation}\label{eq:zeta.graphe.A_n}
A_n = |B_{st}(n) \cap \Theta(P)| - |B_{st}(n-1) \cap \Theta(P)|.
\end{equation}
For any combinatorial graph, the unit ball $B_{st}$ of the stable
norm is a $b$-dimensional polytope whose vertices are rational
with respect to the lattice $H_1(P, \Z)$ (see\cite{Babenko06}).
Applying Ehrhart's theorem (\cite{Ehrhart62}), taking the equality
$B_{st}(t) = tB_{st}$ into account, we get a natural number $m$,
which depends only on  $B_{st}$, such that
$$
|B_{st}(n) \cap \Theta(P)| = Vn^b + q(n),
$$
where  $q(n)$ is an  $m$-quasipolynomial  of degree at most $b-1$
with rational coefficients, that is,  $q(n) =
\mathop{\sum}\limits_{l=0}^{b-1}q_l(n)n^l$, where $q_l(n)$ are
$m$-periodic functions. From this and  (\ref{eq:zeta.graphe.A_n})
we deduce
$$
A_n = bVn^{b-1} + p(n),
$$
where  $p(n) = \mathop{\sum}\limits_{l=0}^{b-1}p_l(n)n^l$  is a
$m$-quasipolynomial  of degree at most $b-1$ with rational
coefficients. Observe that the leading coefficient  $p_{b-1}(n)$
averages to zero : $\mathop{\sum}\limits_{k=1}^{m}p_{b-1}(k)=0$.
This is because $p(n)=q(n)-q(n-1)$ so, setting $q(n) =
\mathop{\sum}\limits_{l=0}^{b-1}q_l(n)n^l$, we get
$$
p(n)= q(n)-q(n-1)=  \mathop{\sum}\limits_{l=0}^{b-1}q_l(n)n^l -
\mathop{\sum}\limits_{l=0}^{b-1}q_l(n-1) (n-1)^l
$$
whose leading coefficient is $q_{b-1}(n)-q_{b-1}(n-1)$, which
averages to zero. We shall use this fact in the proof of part 2).

Now  we get from (\ref{eq:zeta.graphe.2})
\begin{equation}\label{eq:zeta.graphe.p(n)}
\z_{st}(z) = \mathop{\sum}\limits_{n = 1}^{\infty}{bVn^{b-1} +
p(n)\over n^z} = bV\zeta(z-b+1) +
\mathop{\sum}\limits_{l=0}^{b-1}\bigg{(}\mathop{\sum}\limits_{n =
1}^{\infty}{p_l(n)\over n^{z-l}}\bigg{)} , \ \ \text{Re}z > b.
\end{equation}
Define a basis  $\{\alpha_k\}_{k=1}^m$ for the space of
$m$-periodic functions on $\N$, by the formula
$$
\alpha_k(n) = 1 \mbox{  if } n \equiv k\mod m \mbox{ and }0 \mbox{
otherwise, for }k=1,2, \dots , m.
$$
 Set  $p_l(n) = \mathop{\sum}\limits_{k=1}^m
p_{lk}\alpha_k(n)$, with $p_{lk} \in Q$. From
(\ref{eq:zeta.graphe.p(n)})  it follows that
$$
\z_{st}(z) - bV\zeta(z-b+1) =
\mathop{\sum}\limits_{l=0}^{b-1}\bigg{(}\mathop{\sum}\limits_{k=1}^m
p_{lk}\mathop{\sum}\limits_{n = 1}^{\infty}{\alpha_k(n)\over
n^{z-l}}\bigg{)} =
\mathop{\sum}\limits_{l=0}^{b-1}m^{l-z}\bigg{(}\mathop{\sum}\limits_{k=1}^m
p_{lk}\zeta\Big{(}z-l; {k\over m}\Big{)}\bigg{)},
$$
which entails (\ref{eq:zeta.graphe.1}), thus proving part 3) of
the theorem. Part 2) now follows from  (\ref{eq:zeta.graphe.1}),
and the fact that the right-hand side in (\ref{eq:zeta.graphe.1})
doesn't have a pole at $z = b$, because
$\mathop{\sum}\limits_{k=1}^m p_{b-1}(k) = 0$.
\end{proof}
\begin{remark}If $k < m$ are mutually prime integers, it is well known
(see \cite{Apostol76}, Chapter 12)  that Hurwitz' zeta function $\zeta(s, {k \over m})$
may be expressed as a linear combination  of Dirichlet's
L-functions. Thus Formula (\ref{eq:zeta.graphe.1}) may be
rephrased in terms of Dirichlet's L-functions.
\end{remark}


\section{Analytical properties of systolic zeta functions}\label{ch:proprietes.zeta}


For any finite Riemannian polyhedron $(P, g)$ the systolic zeta
function $\z_{\sys}(z)$ defined in Section  \ref{ch:zeta}
satisfies a property analogous to
\ref{prop:zeta_B}. 
Let  $P$ be a finite simplicial polyhedron and set
$$
\Te(P) = \big{\{}(H_1(P, \Z)/\text{Tors}) \setminus \{0\}\big{\}}.
$$
For a Riemannian metric  $g$ on $P$ we consider the length
spectrum $L_{\Te} = \{l_{\te}\}_{\te\in \Te}$  and denote  $V(g) =
\V (B(1))$ the volume of the unit ball of the stable norm in
$H_1(P, \R)$.
\begin{theorem}\label{th:zeta.syst.A}
For any finite Riemannian polyhedron $(P, g)$ the systolic zeta
function $\z_{sys}(z)$ satisfies:

\noindent 1) the series
$$
\z_{sys}(z)=\mathop{\sum}\limits_{\te \in \Te}\frac{1}{l_{\te}^z}
$$
 converges for all $z$ such that  $\text{Re}(z)
> b_1(P)$ and diverges if  $Re(z) < b_1(P)$.

\noindent 2) the function  $\z_{sys}(z)$ extends analytically to
the half-plane $\text{Re}(z) > b_1-1$, with a simple pole at
$z=b_1$ with residue $Res_{b_1}(\z_{sys}(z)) = b_1V(g)$.
\end{theorem}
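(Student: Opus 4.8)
The plan is to reduce Theorem~\ref{th:zeta.syst.A} to Proposition~\ref{prop:zeta_B} by comparing the homology length spectrum $L_\Te=\{l_\te\}$ with the stable length spectrum $\{\|\te\|_{st}\}$ on the same index set $\Te=\Gamma\setminus\{0\}$, $\Gamma=H_1(P,\Z)/\mathrm{Tors}$. The key geometric input is that these two families of positive numbers are, in a precise quantitative sense, close to each other. On the one hand $\|\te\|_{st}\le l_\te$ for every $\te\in\Gamma$, since the stable norm is by definition the limit $\lim_n l_{n\te}/n$ and subadditivity of $\te\mapsto l_\te$ forces $l_\te\ge\|\te\|_{st}$. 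On the other hand, there is a constant $C=C(P,g)$ such that $l_\te\le\|\te\|_{st}+C$ for all $\te$: this is the standard fact that the stable norm is a \emph{uniform} (not merely asymptotic) approximation of the mass, and it follows from the existence of a finite generating set for $\Gamma$ together with subadditivity, or from a direct filling/triangulation argument on $(P,g)$. Thus
\begin{equation}\label{eq:comparison.plan}
\|\te\|_{st}\le l_\te\le\|\te\|_{st}+C\qquad\text{for all }\te\in\Te.
\end{equation}

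Granting \eqref{eq:comparison.plan}, part 1) is immediate: the counting function $|\Te_{\le t}|=|\{\te:l_\te\le t\}|$ is squeezed between $|B_{st}(t-C)\cap\Gamma|$ and $|B_{st}(t)\cap\Gamma|$, both of which are $Vt^{b_1}+\O(t^{b_1-1})$ by the lattice point count \eqref{eq:3.A(t)}; hence $|\Te_{\le t}|=Vt^{b_1}+\O(t^{b_1-1})$, so by \eqref{eq:2.convergence} the series converges and is holomorphic for $\mathrm{Re}(z)>b_1$, and it diverges for real $z<b_1$ since the number of terms of size $\le t$ already grows like $t^{b_1}$. For part 2), I would write, for $\mathrm{Re}(z)>b_1$,
$$
\z_{sys}(z)-\z_{st}(z)=\sum_{\te\in\Te}\left(\frac{1}{l_\te^z}-\frac{1}{\|\te\|_{st}^z}\right),
$$
and show this difference series already converges and is holomorphic on the larger half-plane $\mathrm{Re}(z)>b_1-1$. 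This is the analogue of the passage from $\z_\B$ to $F(z)$ in the proof of Proposition~\ref{prop:zeta_B}: apply the Mean Value Theorem to $t\mapsto t^{-z}$ on the interval with endpoints $\|\te\|_{st}$ and $l_\te$ (whose length is at most $C$ by \eqref{eq:comparison.plan}) to bound each summand by $|z|\,C\,\|\te\|_{st}^{-\mathrm{Re}(z)-1}$ for $\|\te\|_{st}$ large; since $|\{\te:\|\te\|_{st}\le t\}|=\O(t^{b_1})$, the series $\sum_\te\|\te\|_{st}^{-\mathrm{Re}(z)-1}$ converges for $\mathrm{Re}(z)+1>b_1$, i.e.\ on $\mathrm{Re}(z)>b_1-1$, and standard normal-convergence estimates give holomorphy there. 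Then $\z_{sys}(z)=\z_{st}(z)+\big(\z_{sys}(z)-\z_{st}(z)\big)$ exhibits $\z_{sys}$ on $\mathrm{Re}(z)>b_1-1$ as $\z_{st}$ plus a holomorphic function; by Proposition~\ref{prop:zeta_B} applied to $\B_{st}=(H_1(P,\R),\|\cdot\|_{st}^g)$, $\z_{st}$ extends to $\mathrm{Re}(z)>b_1-1$ with a simple pole at $z=b_1$ of residue $b_1V(g)$, and therefore so does $\z_{sys}$, with the same residue.

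I expect the main obstacle to be the upper bound in \eqref{eq:comparison.plan}, i.e.\ proving that the defect $l_\te-\|\te\|_{st}$ is bounded \emph{uniformly} over all $\te\in\Gamma$, not just $o(\|\te\|_{st})$. One clean way: fix generators $\gamma_1,\dots,\gamma_{b_1}$ of $\Gamma$ and closed curves realizing them; any $\te$ can be written $\te=\sum n_i\gamma_i$, and concatenating $n_i$ loops around each $\gamma_i$ gives $l_\te\le\sum|n_i|\,l_{\gamma_i}$, which unfortunately only gives a linear-in-$\te$ bound, not what we want. The correct argument instead compares $l_\te$ directly to the stable norm via a minimizing current/cycle: one takes a rectifiable $1$-cycle $T$ in class $\te$ with mass close to $\|\te\|_{st}$, projects it into the $1$-skeleton of a fixed triangulation, and controls the length lost in the projection by a constant depending only on $(P,g)$ and the triangulation; equivalently, invoke the known result that for a compact length space the mass norm and the stable norm differ by a bounded amount (a quantitative form of Federer's and Gromov's results, cf.\ \cite{Federer74}, \cite{Gromov83}). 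Once \eqref{eq:comparison.plan} is in hand the rest is a routine transcription of the proof of Proposition~\ref{prop:zeta_B}, so I would spend the bulk of the write-up establishing the uniform comparison carefully, possibly isolating it as a separate lemma.
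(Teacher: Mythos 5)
Your proposal is essentially the paper's own proof: both rest on the uniform comparison $\|\te\|_{st}\le l_\te\le\|\te\|_{st}+C$, deduce the counting estimate $|\Te_{\le t}|=\O(t^{b_1})$ for part 1), and obtain part 2) by applying the Mean Value Theorem to the difference $\z_{sys}(z)-\z_{st}(z)$, which converges for $\mathrm{Re}(z)>b_1-1$, and then invoking Proposition \ref{prop:zeta_B} for the stable zeta function. The one caveat is that the uniform bound is exactly the paper's Lemma \ref{lemme:Burago}, a genuinely nontrivial theorem due to Burago \cite{Burago92} for tori and to Cerocchi--Sambusetti \cite{Cer.Sambusetti2014} for general polyhedra, which the paper cites rather than proves; so instead of your projection-to-the-skeleton sketch (which does not address the real difficulty, namely converting a possibly disconnected mass-minimizing cycle into a single closed curve at uniformly bounded extra cost) or an attribution to Federer/Gromov, you should simply quote this lemma from the literature as the paper does.
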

The proof uses two tools  : Proposition \ref{prop:zeta_B}
and the following lemma originally due to D. Burago
\cite{Burago92}.
\begin{lemme}\label{lemme:Burago}
For any finite Riemannian polyhedron $(P, g)$ there exists  $C>0$
such that
$$
\big{|}l_{\te} - \|\te \|_{st}\big{|} \leq C, \ \ \te \in H_1(P,
\Z)/\text{Tors}.
$$
\end{lemme}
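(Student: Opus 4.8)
The plan is to compare closed geodesics in a fixed non-trivial homology class $\te$ with minimizing currents (or, concretely, with short cycles in a suitable triangulation) realizing the stable norm $\|\te\|_{st}$, and to produce a constant $C$ independent of $\te$ bounding the discrepancy in both directions. First I would recall that $\|\te\|_{st} \le l_{\te}$ always holds: any closed geodesic $\gamma$ in the class $\te$ is in particular a Lipschitz cycle representing $\te$, so by the definition of the stable norm as an infimum of masses of real cycles (Federer's mass norm, see \cite{Federer74}) we have $\|\te\|_{st} \le \mathrm{mass}(\gamma) = l_{\te}$. Hence the content of the lemma is entirely the reverse inequality $l_{\te} \le \|\te\|_{st} + C$.

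For the reverse inequality I would fix a finite simplicial structure on $P$ together with a set of generators $e_1,\dots,e_{b_1}$ of $H_1(P,\Z)/\mathrm{Tors}$ represented by fixed closed paths of bounded length, and let $R = \max_i \mathrm{length}(e_i)$. The key geometric input, which is exactly Burago's estimate, is that for every integer $n$ and every $\te$ one can realize the class $n\te$ by a closed curve whose length is at most $n\|\te\|_{st} + C_0$, where $C_0$ depends only on $(P,g)$: this is obtained by approximating a mass-minimizing normal $1$-current in the class $n\te$ by an integral polyhedral cycle in the $1$-skeleton, the "rounding" error being controlled by the geometry of a fixed triangulation and independent of $n$ and of $\te$ (this is the heart of \cite{Burago92}; one can also cite the polyhedral version in \cite{Babenko06}). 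Then $l_{n\te} \le n\|\te\|_{st} + C_0$, so dividing by $n$ recovers $\|\te\|_{st} = \lim_n l_{n\te}/n$; but more importantly, applying this with $n=1$ already gives $l_{\te} \le \|\te\|_{st} + C_0$. Taking $C = C_0$ completes the proof.

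The step I expect to be the main obstacle is making the "rounding error is uniform in $\te$" claim precise. Naively, approximating a $1$-current of large mass by a simplicial cycle could introduce an error growing with the mass; the point of Burago's argument is that one only needs to correct the current near the codimension-one faces of the triangulation, and the total correction is bounded by a constant times the number of simplices, with coefficients controlled because the approximating cycle can be taken with the same total mass up to $O(1)$. I would organize this by first pushing the minimizing current into a tubular neighbourhood of the $1$-skeleton (deformation theorem of geometric measure theory), then projecting to the $1$-skeleton along a fixed retraction, and finally rounding the resulting real $1$-chain on the $1$-skeleton to an integral one whose class is still $\te$; each step costs at most an additive constant depending only on the fixed triangulation and metric. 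Since the statement of the lemma is classical and only used as a black box in what follows, I would keep this argument brief and refer the reader to \cite{Burago92} and \cite{Babenko06} for the detailed constants.
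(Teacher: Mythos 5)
First, a point of comparison: the paper does not actually prove this lemma --- it is quoted as a known result, with the explicit remark that Burago's original argument \cite{Burago92} only covers the torus (his Lemma 1 does not extend to general polyhedra) and that the general case is due to \cite{Cer.Sambusetti2014}. Your easy direction $\|\te\|_{st}\le l_{\te}$ is correct and standard (subadditivity of $\te\mapsto l_{\te}$, or Federer's mass description of the stable norm). The difficulty is entirely in the reverse inequality, and there your argument is circular as written: the ``key geometric input'' you invoke --- every class $\sigma$ is represented by a closed curve of length at most $\|\sigma\|_{st}+C_0$ with $C_0$ uniform in $\sigma$ --- \emph{is} the lemma (take $\sigma=\te$); nothing is gained by passing through $n\te$ and dividing by $n$.

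The GMT sketch you offer to justify that input has a concrete gap: the Federer--Fleming deformation theorem controls the mass of the deformed cycle only up to a \emph{multiplicative} constant $c=c(P)$ depending on the triangulation, so pushing a mass-minimizing real $1$-current in the class $\te$ onto the $1$-skeleton yields at best $l_{\te}\le c\,\|\te\|_{st}+O(1)$ with $c>1$, which is strictly weaker than the additive bound required (and would not suffice to locate the pole of $\z_{sys}$ at $z=b_1$ in Theorem \ref{th:zeta.syst.A}). The later steps --- rounding real coefficients on a fixed finite $1$-skeleton to integers while staying in the class $\te$, and reconnecting components at cost $O(\mathrm{diam})$ --- can indeed be made $O(1)$, but they do not repair the multiplicative loss in the first step. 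The proofs that actually work are coarse-geometric rather than measure-theoretic: one shows that the orbit map of the $\Z^{b_1}$-action on the abelian cover is a quasi-isometry with purely additive error, via a subdivision argument (Burago, for tori) or a quantitative Margulis-type lemma for $\Z^n$-actions (\cite{Cer.Sambusetti2014}, in general). Finally, \cite{Babenko06} concerns the shape of the stable unit ball for graphs and does not contain the estimate you attribute to it; the reference you need for general polyhedra is \cite{Cer.Sambusetti2014}.
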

The proof given in  \cite{Burago92} (Theorem  1) applies to the
particular case of the torus $T^m$, one of the lemmata (Lemma 1)
does not extend to the general case. A proof of the general case
is published in   \cite{Cer.Sambusetti2014}.
\begin{proof}[Proof of Theorem \ref{th:zeta.syst.A}]Set $l_{\te} = \|\te\|_{st} +
\a(\te)$, Lemma \ref{lemme:Burago} entails
$$
|\a(\te)| \leq C, \ \ \te \in \Te
$$
from which follows the estimate (\ref{eq:2.convergence}) :
$$
|\Te_{\leq t}| = \O\big{(}t^{b_1(P)}\big{)}.
$$
Therefore the series (\ref{eq:zeta}) converges if  $\mbox{Re}(z) >
b_1(P)$ so $\z_{sys}(z)$ is analytical in the half-plane $\mbox{Re}(z) >
b_1(P)$. On the other hand the series $\z_{st}(z)$ diverges if
$\mbox{Re}(z) < b_1(P)$, whence  $\z_{sys}(z)$ also diverges for $\mbox{Re}(z) <
b_1(P)$, which proves 1). Now observe that
$$
\z_{sys}(z) - \z_{st}(z) =
\mathop{\sum}\limits_{\te\in\Te}\bigg{(}\frac{1}{l_{\te}^z} -
\frac{1}{\|\te\|_{st}^z}\bigg{)},
$$
so, for  $z \in \R$,
\begin{equation}\label{eq:theorem}
\z_{sys}(z) - \z_{st}(z) = -z
\mathop{\sum}\limits_{\te\in\Te}\bigg{(}\frac{\a(\te)}{(\|\te\|_{st}
+ \b(\te))^{z+1}}\bigg{)},
\end{equation}
with $|\b(\te)| < |\a(\te)| \leq C$. The series (\ref{eq:theorem})
converges for  $\mbox{Re}(z) > b_1 -1$, so Proposition \ref{prop:zeta_B}
implies 2), which ends the proof.
\end{proof}
The systolic zeta function  $\z_{sys}(z)$ of a Riemannian
polyhedron  $(P, g)$ encodes a lot of information about $(P, g)$.
For instance the ordered homology length spectrum  $|L_{\Te}|$ may
be recovered from $\z_{sys}(z)$. It follows from a  classical
result of  \cite{Hardy1915} (Theorem 13)  that for all $t \notin
L_{\Te}$ and $c
> b_1(P)$ we have
\begin{equation}\label{eq:integrale}
|\Te_{\leq t}| = {1\over 2\pi
i}\mathop{\int}\limits_{c-i\infty}^{c+i\infty}\z_{sys}(z)e^{tz}{dz\over
z}.
\end{equation}
The integral  (\ref{eq:integrale}) is non-decreasing and piecewise
constant as a function of $t$. The discontinuities of this function
are exactly the points of the ordered length spectrum, and  the
jumps are the corresponding multiplicities.

On the other hand the metric itself cannot be recovered from
$\z_{sys}(z)$. Milnor (\cite{Milnor1964}) gives an example of two
isospectral  (therefore having the same systolic zeta function)
non-isometric 16-dimensional  flat tori, using two  lattices in
$\R^{16}$ discovered by Witt \cite{Witt1941}.

In view of  formula (\ref{eq:integrale}),  an important metric
invariant of $(P, g)$ is
$$
r(P, g) =\inf \{ t :  \Z\langle \Te_{\leq t} \rangle = H_1(P,
\Z)/\text{Tors}\}.
$$
In plain language $r(P, g)$ is the minimal possible length of a
basis of the $\Z$-module  $H_1(P, \Z)/\text{Tors}$. It is known
(see \cite{GLP}, Prop. 5.28) that $r(P, g) \leq 2\text{diam}(P,
g)$.


\begin{ex}\label{ex:zeta.min}
Here we consider a closed, orientable Riemannian manifold $(M,g)$
of  dimension two. It is known (\cite{Massart_these},
\cite{Massart97}) that for all $\te \in \Gamma = H_1(M, \Z)$ there
exist closed geodesics $\gamma_1,\ldots \gamma_k$, $k \leq
b_1(M)/2$,  and integers $\lambda_1,\ldots \lambda_k$,  such that
$\te= \sum_{i=1}^k \lambda_i \left[ \gamma_i \right]$ et
\begin{equation}\label{eq:somme_gamma_i}
\left\|\te \right\|_{st}= \sum_{i=1}^k  \lambda_i \|\left[
\gamma_i \right]\|_{st} = \sum_{i=1}^k  \lambda_i l_g(\gamma_i).
\end{equation}
This motivates our interest in the family
$$
\Te_{min} := \{ \left[\gamma\right] : \mbox{ closed geodesic such that } \|
\left[\gamma\right] \|_{st}=  l_g(\gamma)  \}
$$
and the corresponding zeta function
$$
\zeta_{min}(z) := \sum_{\gamma \in \Te_{min} } l_g(\gamma)^{-z}.
$$
From the obvious inclusion
$$
\{ l_g(\gamma) \ | \gamma \in \Te_{min} \} \subset \{ \|\te\|_{st}
\ | \te \in  \Gamma \}
$$
follows the  convergence of the series $\zeta_{min}(z)$ in the
half-plane  $\text{\mbox{Re}}(z) > b_1(M)$.

On the other hand, from  \cite{Massart_Parlier} we know that
$\zeta_{min}(z)$ cannot converge on any half-plane properly
containing $\text{Re}(z) > 2$. Examples of long-necked surfaces
are studied in \cite{Massart_Parlier}, for such surfaces
$\zeta_{min}(z)$ converges on  $\text{Re}(z) > 2$, and extends
analytically to $\text{Re}(z) > 1$.

It would be interesting to determine the domain of convergence of
$\zeta_{min}(z)$. Another question of interest is whether, in the
case of a hyperbolic metric $g$, $\zeta_{min}(z)$  depends
analytically on the metric, thus defining an analytical function
on the product of Teichm\"uller space with a half-plane.
\end{ex}
\begin{ex}\label{ex:fonction.theta}
Let  $(P, g)$ be a Riemannian polyhedron and let  $\z_{sys}(z)$ be
its systolic zeta function. Applying Mellin's transform  (see
\cite{Hardy1915}, Theorem 11) we get  :
$$
\z_{sys}(z)=\mathop{\sum}\limits_{\te \in \Te}\frac{1}{l_{\te}^z}
= {1\over \Gamma (z)}\mathop{\int}\limits_0^{\infty}\bigg{(}
x^{{z\over 2}-1}\mathop{\sum}\limits_{\te \in \Te}e^{-l^2_{\te}x}
\bigg{)}dx,
$$
for $\text{Re}(z) > b_1(P)$, where $\Gamma (z)=
\int^{+\infty}_{0}t^{z-1}e^{-t}dt$. If  $P = T^b =R^n/\Lambda$ and
$g$  is a flat metric, the function
$$
{\bf \Theta}_g\Big{(}{ix \over \pi}\Big{)} = 1 +
\mathop{\sum}\limits_{\te \in \Te}e^{-l^2_{\te}x}
$$
it the theta function of the lattice $\Lambda$ (see
\cite{Serre70}). For a  $b$-dimensional flat torus $\R^n/\Lambda$,
we obtain the following equality :
$$
\z_{sys}(z) = \z_{st}(z) = {1\over \Gamma
(z)}\mathop{\int}\limits_0^{\infty} x^{{z\over 2}-1}\bigg{(}{\bf
\Theta}_g\Big{(}{ix \over \pi}\Big{)} - 1 \bigg{)}dx,
$$
where  $\text{Re}(z) > b$. 
The most interesting case is
when the lattice is even and unimodular, then ${\bf \Theta}_g$ is
a modular form of weight $b \over 2$, see \cite{Serre70}.
\end{ex}

\section{The Zeta map}
Let $(M, g)$ be a Riemannian manifold and let  $\z_{sys}(z)$ be its
systolic zeta function. In this section we investigate some
properties of the  mapping
\begin{equation}\label{eq:Z-application}
Z: g \longrightarrow \z_{sys}(z)
\end{equation}
from the set of continuous Riemannian metrics on  $M$, which we
denote by $\M(M)$, to the space of holomorphic functions in the
half-plane $\{\mbox{Re}(z) > b_1(M)\}$.

First, let us endow the set  $\M(M)$ with a distance function. Let
$g_i \in \M(M), i=1, 2$ be continuous Riemann metrics on $M$, and
set
\begin{equation}\label{eq:metric.sur.metrics}
\varrho(g_1, g_2) = \mathop{\sup}\limits_{q\in M}\big{(}\inf\{\rho
\in \R_+ \hskip4pt \big{|} \hskip4pt e^{-\rho}\|v_q\|_{g_1}\leq
\|v_q\|_{g_2}\leq e^{\rho}\|v_q\|_{g_1}; v_q \in T_qM \} \big{)}.
\end{equation}
It is easily seen that  $\varrho$ makes $\M(M)$ a complete metric
space. If $\gamma(t), t\in [0, 1]$  is a piecewise smooth curve in
$M$,  and $\rho=\varrho(g_1, g_2)$,  then the $g_i$-lengths of
$\gamma$ are related by the inequalities
$$
e^{-\rho}l^{g_1}(\gamma(t)) \leq l^{g_2}(\gamma(t)) \leq
e^{\rho}l^{g_1}(\gamma(t)).
$$
It follows that the  $g_i$-homology length spectra are
$\varrho$-close:
$$
e^{-\rho}l^{g_1}_{\te} \leq l^{g_2}_{\te} \leq
e^{\rho}l^{g_1}_{\te}, \ \ \te \in \Te(M).
$$

Denote by $\H(b)$ the space of homolorphic functions in the
half-plane $\{ \mbox{Re}(z) > b \}$, topologized by uniform convergence on
compact sets. The above inequality implies the following:
\begin{prop}\label{prop:application.Z.1}
The mapping  $Z: \M(M) \longrightarrow \H(b_1(M))$ defined in
(\ref{eq:Z-application})  is continuous.
\end{prop}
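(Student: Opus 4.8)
The plan is to verify that $Z$ is continuous at an arbitrary metric $g_0\in\M(M)$. Since $\H(b_1(M))$ carries the (metrizable) topology of uniform convergence on compact subsets of $\{\text{Re}(z)>b_1(M)\}$, it suffices to fix such a compact set $K$, set $\sigma_0=\inf_{z\in K}\text{Re}(z)$ (so $\sigma_0>b_1(M)$) and $R=\sup_{z\in K}|z|$, write $\z_{sys,g}$ (resp. $\z_{sys,g_0}$) for the systolic zeta function of $(M,g)$ (resp. $(M,g_0)$), and show that
$$
\sup_{z\in K}\bigl|\z_{sys,g}(z)-\z_{sys,g_0}(z)\bigr|\longrightarrow 0\qquad\text{as }\varrho(g,g_0)\to 0 .
$$

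For the main estimate, put $\rho=\varrho(g,g_0)$ and $u_\te=l^g_\te/l^{g_0}_\te$ for $\te\in\Te(M)$. The inequality between $\varrho$-close homology length spectra recorded just before the statement gives $e^{-\rho}\le u_\te\le e^{\rho}$, that is $|\log u_\te|\le\rho$, uniformly in $\te$. By Theorem \ref{th:A} both series $\z_{sys,g}$ and $\z_{sys,g_0}$ converge for $\text{Re}(z)>b_1(M)$; since all terms are positive this means they are absolutely summable over $\Te(M)$ there, so one may subtract them term by term,
$$
\z_{sys,g}(z)-\z_{sys,g_0}(z)=\sum_{\te\in\Te}(l^{g_0}_\te)^{-z}\bigl(u_\te^{-z}-1\bigr),
$$
and from the elementary bound $|e^{w}-1|\le|w|e^{|w|}$ applied to $w=-z\log u_\te$ one gets $|u_\te^{-z}-1|\le R\rho\,e^{R\rho}$, uniformly in $\te$ and in $z\in K$.

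It then remains to dominate $\sum_{\te}|(l^{g_0}_\te)^{-z}|=\sum_{\te}(l^{g_0}_\te)^{-\text{Re}(z)}$ uniformly over $z\in K$: for each fixed $\te$ the function $x\mapsto(l^{g_0}_\te)^{-x}$ is monotone, so for $\text{Re}(z)\in[\sigma_0,R]$ it is bounded by $(l^{g_0}_\te)^{-\sigma_0}+(l^{g_0}_\te)^{-R}$, whence
$$
\sum_{\te\in\Te}(l^{g_0}_\te)^{-\text{Re}(z)}\le\z_{sys,g_0}(\sigma_0)+\z_{sys,g_0}(R)=:M(g_0,K)<\infty
$$
by Theorem \ref{th:A} again. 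Combining the last three displays gives $\sup_{z\in K}|\z_{sys,g}(z)-\z_{sys,g_0}(z)|\le M(g_0,K)\,R\rho\,e^{R\rho}$, which tends to $0$ as $\rho=\varrho(g,g_0)\to 0$; since $K$ was an arbitrary compact subset of $\{\text{Re}(z)>b_1(M)\}$, this establishes continuity of $Z$ at $g_0$. The argument is essentially routine; the only points that require a little care — and the closest thing to a genuine obstacle — are justifying the term-by-term subtraction (which rests on the absolute summability supplied by Theorem \ref{th:A}) and arranging that every estimate is uniform over the compact set $K$.
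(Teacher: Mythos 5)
Your proof is correct and follows exactly the route the paper intends: the paper derives the inequality $e^{-\rho}l^{g_1}_{\te}\le l^{g_2}_{\te}\le e^{\rho}l^{g_1}_{\te}$ and simply asserts that it implies the proposition, and your argument is the natural fleshing-out of that implication (factor out $(l^{g_0}_{\te})^{-z}$, bound $|u_{\te}^{-z}-1|$ uniformly on a compact $K$ by $R\rho e^{R\rho}$, and dominate the remaining sum by values of $\z_{sys,g_0}$ at real points $>b_1$). The only cosmetic point is the phrase ``since all terms are positive'': for complex $z$ the terms are not positive, but absolute summability follows from convergence at the real point $\mathrm{Re}(z)>b_1(M)$, which is what you in fact use.
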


Let  $(\R^b, \Gamma)$ be a vector space with a lattice. To any
Banach space structure  $\B = (\R^b, \|\cdot \|)$  on $\R^b$ we
associate, as  in Section  \ref{ch:zeta.stable}, the function
$\z_{\B}(z)$. Fixing $\Gamma$, and varying the norm $ \|\cdot \|$,
we define as in  (\ref{eq:Z-application}) a map
\begin{equation}\label{eq:Z-Banach-application}
Z_B: \B \longrightarrow \z_{\B}(z).
\end{equation}

Denote by $B = B(b)$ the set of all norms on $\R^b$. Any two  norms
$\B_i = (\R^b, \|\cdot \|_i), \ \ i=1,2$ are equivalent. Let
$c_{12}$ be the minimal  constant such that
$$
c_{12}^{-1}\|{\bf x}\|_1 \leq \|{\bf x}\|_2 \leq c_{12}\|{\bf
x}\|_1 \ \ {\bf x}\in \R^b.
$$
Then  $\rho(\B_1, \B_2):= \ln c_{12}$  defines a distance on $B$.
The map (\ref{eq:Z-Banach-application}) is easily seen to be
continuous.

Now we would like  to describe the image $I(b) = \text{im}(Z_B)$
of the map (\ref{eq:Z-Banach-application}). The group $GL(b)$
operates on  $B$ by the rule  $\| {\bf x} \|_h = \| h^{-1}({\bf
x}) \|, \ \ h \in GL(b)$. 
 This action transfers to zeta functions is such a way that
 $Z_B$ is  equivariant.
The set   $Q(b) = B(b)/ GL(b)$ of isomorphism classes of  Banach
structures on  $\R^b$, called the  Banach-Masur space, is  compact
with respect to its natural metric.  The quotient map
$$
\hat{Z_B}: Q(b) \longrightarrow I(b)/ GL(b)
$$
is onto. We have just proved the following
\begin{theorem}\label{th:zeta.syst}
The group $GL(b)$ operates in a natural way on  $I(b)$. The
quotient set  $I(b) /GL(b)$  is  compact. For any $f \in I(b)$ and
any  $h \in GL(b)$ we have
$$
Res_b f_h = ({\det}h) Res_b f.
$$

\end{theorem}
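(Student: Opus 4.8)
The plan is to collect the three assertions from the discussion preceding the statement, filling in the computations left implicit; both quantitative points reduce to Proposition~\ref{prop:zeta_B} and to the known compactness of Banach--Mazur space.

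First I would make the $GL(b)$-action on $I(b)$ explicit. An element $h\in GL(b)$ sends a norm $\|\cdot\|$ to $\|\cdot\|_h:=\|h^{-1}(\cdot)\|$; writing $\B_h=(\R^b,\|\cdot\|_h)$, one sees at once that the unit ball of $\B_h$ is $h\bigl(B_{\B}(1)\bigr)$ and that $\z_{\B_h}(z)=\sum_{\te\in\Gamma\setminus\{0\}}\|h^{-1}\te\|^{-z}$. In particular $Z_B(\B_h)\in I(b)$, so the rule $f=Z_B(\B)\mapsto f_h:=Z_B(\B_h)$ transfers the action to $I(b)$ and makes $Z_B$ equivariant by construction. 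The one point needing care is that $Z_B$ need not be injective --- isospectral flat tori furnish two norms (with respect to a fixed lattice) having equal zeta functions --- so the rule must be checked to be independent of the chosen preimage $\B$; for the residue statement this independence is automatic, since it factors through the volume of the unit ball, and for the rest one works directly on the quotient $I(b)/GL(b)$, which is what the theorem concerns.

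For the compactness of $I(b)/GL(b)$, I would use the commutative square formed by $Z_B\colon B(b)\to I(b)$, the quotient map $\pi\colon B(b)\to Q(b)=B(b)/GL(b)$, the quotient map $\pi'\colon I(b)\to I(b)/GL(b)$, and the induced map $\hat{Z_B}\colon Q(b)\to I(b)/GL(b)$. As $Z_B$ is continuous (recalled before the statement) and $\pi$ is a quotient map, continuity of $\pi'\circ Z_B$ forces that of $\hat{Z_B}$; and $\hat{Z_B}$ is onto because $Z_B$ is onto $I(b)$. Since $Q(b)$ is compact, so is its continuous image $I(b)/GL(b)=\hat{Z_B}\bigl(Q(b)\bigr)$.

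Finally, for the residue identity I would invoke Proposition~\ref{prop:zeta_B}: $Res_b\,Z_B(\B)=b\,\V\bigl(B_{\B}(1)\bigr)$, where $\V$ is the Haar measure on $\R^b$ normalised so that $\R^b/\Gamma$ has volume $1$. The action fixes $\Gamma$, so this normalisation is unchanged, and a linear map multiplies Haar measure by the modulus of its determinant; hence $\V\bigl(B_{\B_h}(1)\bigr)=\V\bigl(h(B_{\B}(1))\bigr)=|\det h|\,\V\bigl(B_{\B}(1)\bigr)$, and therefore $Res_b\,f_h=b\,\V\bigl(B_{\B_h}(1)\bigr)=|\det h|\,Res_b\,f$. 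This is the asserted formula (the determinant being understood in absolute value, since $Res_b$ of an element of $I(b)$ is positive), and it also shows the right-hand side depends on $\B$ only through $Res_b f$, closing the loop of the previous paragraph. I do not expect a genuine obstacle: the two substantive statements are formal consequences of Proposition~\ref{prop:zeta_B} and the compactness of $Q(b)$, and the only delicate issue is the set-theoretic well-definedness of the transferred action noted above, which is harmless for everything the theorem actually asserts.
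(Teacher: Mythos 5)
Your proof follows essentially the same route as the paper, whose argument is exactly the discussion preceding the statement: the $GL(b)$-action on norms with $Z_B$ equivariant, compactness of the Banach--Mazur space $Q(b)$ pushed forward through the continuous surjection $\hat{Z_B}\colon Q(b)\to I(b)/GL(b)$, and the residue formula via Proposition \ref{prop:zeta_B} combined with the scaling of Haar measure under $h$. Your two added remarks --- that the transferred action on $I(b)$ needs to be checked independent of the preimage since $Z_B$ is not injective, and that $\det h$ must be read as $|\det h|$ --- concern points the paper passes over silently, and you handle them correctly.
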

Taking into account the fact  that the  right hand side of
(\ref{eq:theorem}) belongs to $\H(b_1(M)-1)$, we obtain the
\begin{cor}
The image of $Z: \M(M) \longrightarrow \H(b_1(M))$ is locally
compact modulo $\H(b_1(M)-1)$.
\end{cor}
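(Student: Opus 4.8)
The statement should be read in the natural way as: there is a locally compact set $K\subseteq\H(b_1(M))$ with $\mathrm{im}(Z)\subseteq K+\H(b_1(M)-1)$ (the bare quotient $\H(b_1)/\H(b_1-1)$ is too coarse for ``locally compact'' to say anything, since $\H(b_1-1)$ is already dense in $\H(b_1)$). Write $b=b_1(M)$ and fix a linear isomorphism $H_1(M,\R)\cong\R^{b}$ carrying $H_1(M,\Z)/\mathrm{Tors}$ to the standard lattice; with this identification the stable zeta function $\z_{st}(z)$ of a metric $g$ is $\z_{\B}(z)$ for the norm $\B=(\R^{b},\|\cdot\|^{g}_{st})$, so $\z_{st}(z)\in I(b)$, the image of the map $Z_B$ of (\ref{eq:Z-Banach-application}). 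The plan is to take $K=I(b)$. First I would record the reduction: by (\ref{eq:theorem}) together with the observation made just before the corollary that the right-hand side of (\ref{eq:theorem}) lies in $\H(b-1)$, one has $\z_{sys}(z)-\z_{st}(z)\in\H(b-1)$ for every $g$; hence $\mathrm{im}(Z)\subseteq I(b)+\H(b-1)$, and the whole matter reduces to proving that $I(b)$ is locally compact.

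For the local compactness of $I(b)$ I would work through the map $Z_B\colon B(b)\to I(b)$, where $B(b)$ is the space of norms on $\R^{b}$ with the distance $\rho$ defined above. The space $B(b)$ is itself locally compact, by the Blaschke selection theorem: a $\rho$-bounded family of norms is a family of symmetric convex bodies squeezed between two fixed ellipsoids, hence relatively compact for the Hausdorff distance, and the Hausdorff distance is equivalent to $\rho$ on such families. The key point is that $Z_B$ is proper, i.e.\ that a sequence of norms $\B_n$ whose zeta functions $\z_{\B_n}(z)$ stay in a fixed compact subset of $I(b)$ must subconverge to a norm. Indeed, boundedness of $\z_{\B_n}(z_0)$ at a real point $z_0>b$ forces the length $\mu_n=\min_{\te\in\Z^{b}\setminus\{0\}}\|\te\|_{\B_n}$ of a shortest nonzero lattice vector to be bounded below, whence, by Minkowski's theorem, $\vol(B_n(1))$ is bounded above; and if the unit ball $B_n(1)$ nevertheless degenerated in shape, then either a nonzero lattice vector of arbitrarily small $\B_n$-norm would appear (contradicting the lower bound on $\mu_n$), or else all the terms $\|\te\|_{\B_n}^{-z}$ would tend to $0$ and the limit would be the zero function, which does not lie in $I(b)$. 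So the $\B_n$ remain in a $\rho$-bounded family and subconverge by Blaschke, and by continuity of $Z_B$ the limit maps to the prescribed limit in $I(b)$. Since a proper continuous map from a locally compact Hausdorff space has locally compact image, $I(b)$ is locally compact, and the corollary follows with $K=I(b)$. (Alternatively one may invoke Theorem \ref{th:zeta.syst}---$I(b)/GL(b)$ is compact and $GL(b)$ is a locally compact Lie group---but deducing local compactness of $I(b)$ from this still requires the same non-degeneration input, since $GL(b,\Z)$ sits inside every $GL(b,\R)$-stabiliser and the action is not proper.)

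The step I expect to be the real obstacle is exactly this properness of $Z_B$: the quantitative assertion that convergence of a sequence of stable zeta functions confines the underlying unit balls to a fixed Banach--Mazur class of volume bounded away from $0$ and $\infty$. Everything else is formal---the reduction $\mathrm{im}(Z)\subseteq I(b)+\H(b-1)$ is immediate from (\ref{eq:theorem}), local compactness of $B(b)$ is Blaschke selection, and ``$Z_B$ proper $\Rightarrow$ $I(b)$ locally compact'' is a standard point-set fact. Inside the non-degeneration argument the delicate part is the uniformity in $n$: one must bound $\mu_n$ from below and $\vol(B_n(1))$ from above and below by constants independent of $n$, and this is where Proposition \ref{prop:zeta_B} (convergence exactly for $\mathrm{Re}(z)>b$, with residue $Res_b(\z_{\B})=b\,\vol(B(1))$ at $z=b$) and the closedness of proper maps enter.
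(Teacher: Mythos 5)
Your route is genuinely different from the paper's: the paper obtains the corollary simply by combining Theorem \ref{th:zeta.syst} (compactness of $I(b)/GL(b)$, deduced from the compactness of the Banach--Mazur compactum) with the observation that the right-hand side of (\ref{eq:theorem}) lies in $\H(b_1(M)-1)$; it never claims, let alone proves, properness of $Z_B$. Your reading of ``locally compact modulo $\H(b_1(M)-1)$'', the reduction $\text{im}(Z)\subseteq I(b)+\H(b_1(M)-1)$ via (\ref{eq:theorem}), and the point-set fact that a continuous proper (hence closed, the target being metrizable) map from a locally compact Hausdorff space has locally compact image are all fine. The problem is precisely the step you flag as the crux: your argument for properness of $Z_B$ does not work as given.

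The dichotomy ``either a nonzero lattice vector of arbitrarily small $\B_n$-norm appears, or all the terms $\|\te\|_{\B_n}^{-z}$ tend to $0$ and the limit is the zero function'' is false. Take $b=2$, the lattice $\Z^2$, and the norms $\|(x,y)\|_n=\max(|x|,\,n|y|)$, whose unit balls $[-1,1]\times[-1/n,1/n]$ degenerate: the shortest nonzero lattice vector has norm $1$ for every $n$, the terms coming from the points $(p,0)$ do not tend to zero, and $\z_{\B_n}(z)\to 2\zeta(z)$ locally uniformly on $\mathrm{Re}(z)>2$, which is not the zero function. This does not refute properness itself (one checks $2\zeta\notin I(2)$, since every element of $I(b)$ tends to $+\infty$ as $z\downarrow b$ along the reals --- its series diverges at $z=b$, consistently with the pole of residue $bV>0$ in Proposition \ref{prop:zeta_B} --- whereas $2\zeta$ is regular at $z=2$), but it does refute the reasoning you offer for it: the conclusion you need is that no degenerating sequence can have zeta functions accumulating inside $I(b)$, and that is not ``limit $=0$'' but ``limit has no divergence at $z=b$'', which requires uniform lattice-point estimates for degenerating bodies (successive minima and Minkowski's second theorem, not just the first), a compactness argument for the counting measures, and control of their vague limits. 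The need for such an argument is underlined by a second phenomenon your dichotomy ignores: for long thin bodies of fixed volume along a badly approximable direction, every individual $\|\te\|_{\B_n}\to\infty$ and yet $\z_{\B_n}$ stays bounded away from $0$ on real points, because lattice vectors of moderate norm keep appearing farther and farther out; showing that such limits are never of the form $\z_{\B}$ is exactly the missing content. So as written the properness claim, and with it the local compactness of $I(b)$, is not established; your parenthetical criticism of the $GL(b)$-quotient route (stabilisers containing $GL(b,\Z)$, non-proper action) is fair as a comment on the paper's brevity, but it does not repair your own argument.
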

Note that the compactness is only local because  $GL(b)$ is not
compact.


\section{Zeta functions and isoperimetric inequalities with the length spectrum}\label{ch:isoperimetrique}
In this section we consider a closed, orientable manifold  $M$ of
dimension $m$  and first  Betti  number $b =b_1(M)$. First we
define a few algebraic invariants of $M$, see \cite{Babenko92} for
more details. The characteristic map
(\ref{eq:application.caracteristique.bis}) induces a map between
integer homology groups
$$
f_{\ast}: H_m(M, \Z) \longrightarrow H_m(T^b, \Z) \simeq
{\Lambda}_mH_1(T^b, \Z).
$$
 Set $\a(M) = f_{\ast}([M])$ where  $[M]$ is the fundamental class of  $M$. The  Haar measure on $H_1(T^b, \R)$, henceforth denoted by 
$\Vol$,  is normalised in such a way that the lattice $H_1(T^b,
\Z)\subset H_1(T^b, \R)$ has volume 1. Take a basis  ${\bf e} =
\{e_1, \dots, e_b\}$ de $H_1(T^b, \R)$, and decompose
\begin{equation}\label{eq:alpha(M)}
\a(M) = \mathop{\sum}\limits_{i_1 < \dots < i_m}\a_{i_1  \dots
i_m}e_{i_1}\wedge \dots \wedge e_{i_m}.
\end{equation}

Let us say that  ${\bf e}$ is {\it subordinate} to  $M$ if the
inequality $|\a_{i_1 \dots i_m}| \leq 1$ holds for every
coefficient of  (\ref{eq:alpha(M)}). Denote by $E(M)$ the set of
basis subordinate to  $\a(M)$. For any basis ${\bf e}$, let
$\Vol({\bf e})$ be the  volume of the  solid generated by the
vectors in ${\bf e}$. We define the {\it algebraic volume} of  $M$
by setting
$$
V[M] = V(\a(M)) = \mathop{\inf}\limits_{{\bf e}\in E(M)} \Vol({\bf
e}).
$$
Similarly we define the {\it algebraic mass} of  $M$ by
$$
m[M] = m(\a(M)) = \mathop{\inf}\limits_{\Vol({\bf
e})=1}\bigg{(}\mathop{\sum}\limits_{i_1 < \dots < i_m}|\a_{i_1
\dots i_m}| \bigg{)}.
$$
\begin{remark}\label{rk:polyvecteur}
1. Here is the reason for using the word "mass". Endow $H_1(T^b,
\R)$ with a scalar product  $\langle \hskip3pt , \rangle$ and
assume the Haar measure it induces coincides with  $\Vol$. Let
$\mass(\a)$ be the usual mass norm defined on ${\Lambda}_mH_1(T^b,
\R)$  by means of $\langle \hskip3pt , \rangle$
(see\cite{Federer69}). It is easily proved that for all $\a \in
{\Lambda}^mH_1(T^b, \R)$ we have
$$
m(\a) = \mathop{\inf}\limits_{\langle \hskip3pt , \hskip3pt
\rangle}\mass(\a),
$$
where  $\langle \hskip3pt , \rangle$ ranges over the set of scalar
products of volume 1.

\noindent 2.The algebraic volume  $V[M]$ may vanish for manifolds
which are homologically essential, that is,
 $f_{\ast}([M]) \neq 0$.  The invariant $V[M]$ measures the  maximal non-degeneracy
 of $f_{\ast}([M])$  as a polyvector of  $H_1(T^b, \R)$.

\noindent 3. We have the following universal inequality between
algebraic mass and algebraic volume
 (see \cite{Babenko92} for details):
$$
\forall \a \in {\Lambda}^mH_1(T^b, \R),\ \big{(}m(\a)\big{)}^b
\leq \left(
\begin{matrix} b \\ m
\end{matrix}
\right)^b \big{(}V(\a)\big{)}^m.
$$

\noindent 4. When  $m = b_1$ everything boils down to the degree
of the characteristic map $f$:
$$
m[M] = V[M] = |\deg(f)|.
$$
\end{remark}
\begin{ex}\label{ex:vol.algebrique}
Let $M = M_h$ be an  orientable surface of genus $h$. The  class
$\a(M_h) \in H_2(T^{2h}, \Z)$  is induced by a symplectic form
with integer coefficients on $H_1(T^{2h})$. 
It is proved in  \cite{Babenko92} that
$$
h \geq m[M_h] \geq (h!)^{1\over h}; \ \ V[M_h] \geq
{1\over(2h-1)!!},
$$
where $(2h-1)!! = 1\cdot 3\cdot 5 \cdot ... \cdot (2h-1) $.
\end{ex}
\subsection{The universal constant $v_b$}
Let us consider the following variational problem  from convex
geometry (for more about this  see \cite{Babenko88} and - with a
diffferent   renormalisation - \cite{Babenko92}). Let $B$ be a
centrally symmetric convex body  (CSCC) in  $(\R^b, \Vol)$, denote
$\mathcal{P}(B)$ the set of parallelepipeds which contain  $B$.
Define
$$
v_{supp}(B) = {1 \over {2^b}}\mathop{\inf}\limits_{P\in
\mathcal{P}(B)}\Vol(P).
$$
The universal constant we are interested in is defined as
$$
v_b = \mathop{\inf}\limits_{B}\frac{\Vol(B)}{v_{supp}(B)},
$$
where  $B$ ranges over all CCSC's with non-empty interior.

The  constant $v_b$ was introduced in asymptotic geometry in
\cite{Babenko88}. At the same time it  appeared in different
contexts, among which geometric number theory, see the
introduction of \cite{Pelczynski91} for more details.

The asymptotic rate of $v_b$ in $b$ is rather  simple, see
\cite{Kashin89}. There are two positive constants $c, C$ such that
$$
c^b \leq {v_b \over \omega_b} \leq C^b,
$$
where $\omega_b$ is the volume of the unit ball in $\R^b$. The
optimal values of $c$ and $C$ are not known. For practical
purposes one can use $c = {1\over {\sqrt e}}$ and $C = 1$, see
\cite{Pelczynski91}.

The exact value of  $v_b$ is known only when $b=2$ or $v_2 = 3$,
with the infimum achieved at affine regular hexagons
\cite{Babenko88}. An asymptotically correct lower estimate in $b$
is obtained in \cite{Pelczynski91}.

\subsection{Isoperimetric inequality with the length spectrum}
Now let us consider an orientable closed manifold  $M$ of
dimension $m$ with first   Betti number  $b_1 = b_1(M)$. Let $g$
be a Riemannian metric on  $M$, and let  $\z_{sys}(z)$ be its
systolic zeta function.
\begin{prop}\label{th:isoperimetrique}
For any Riemannian metric  $g$ on  $M$  we have the following
inequality:
$$
\bigg{(}\frac{b_1v_{b_1}V[M]}{Res_{b_1}(\z_{sys}(z))}\bigg{)}^m
\leq \big{(}\vol(M, g)\big{)}^{b_1}.
$$
\end{prop}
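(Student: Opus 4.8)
The strategy is to combine the residue computation from Theorem~\ref{th:zeta.syst.A} with a sharp estimate relating the volume of the stable-norm unit ball to the algebraic volume $V[M]$, via the universal constant $v_{b_1}$. By Theorem~\ref{th:zeta.syst.A}, $\z_{sys}(z)$ has a simple pole at $z=b_1$ with $Res_{b_1}(\z_{sys}(z)) = b_1 V(g)$, where $V(g)=\V(B_g(1))$ is the Haar-volume of the unit ball of the stable norm $\|\cdot\|_{st}^g$ on $H_1(M,\R)$, the lattice $H_1(M,\Z)/\text{Tors}$ being normalized to covolume $1$. So the claimed inequality is equivalent to
$$
\big{(} v_{b_1} V[M]\big{)}^m \leq \big{(} V(g)\big{)}^m \cdot \frac{(\vol(M,g))^{b_1}}{(V(g))^{m}} \cdot \frac{1}{1},
$$
i.e.\ after rearranging, to $\big{(}v_{b_1}V[M]\big{)}^m \leq \big{(}\vol(M,g)\big{)}^{b_1}$ divided through appropriately — more precisely, substituting the residue, the inequality to prove becomes
$$
\big{(} v_{b_1} V[M]\big{)}^m \leq V(g)^m \quad\text{together with}\quad V(g)^{b_1}\cdot(\text{something}) \le (\vol(M,g))^{b_1}.
$$
Let me state the two ingredients cleanly. \emph{Ingredient 1 (convex geometry):} for the stable-norm ball $B=B_g(1)\subset H_1(M,\R)$ one has, from the definition of $v_{b_1}$, that $v_{b_1}\, v_{\text{max}}(B)\le \V(B)=V(g)$, where $v_{\text{max}}(B)=2^{-b_1}\inf_{P\supset B}\Vol(P)$. \emph{Ingredient 2 (isoperimetry, the deep input):} one must show
$$
V[M] \le v_{\text{max}}(B_g(1))\cdot \Big{(}\frac{\vol(M,g)}{V(g)}\Big{)}^{b_1/m}\cdot(\text{?}),
$$
or, in the form actually needed, a bound $V[M]^m \le (\text{const})\cdot \vol(M,g)^{b_1}$ in which the constant is exactly $\big{(}v_{b_1}^{-1}Res_{b_1}(\z_{sys})/b_1\big{)}^{m}=\big{(}v_{b_1}^{-1}V(g)\big{)}^m$. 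Concretely: take a basis $\mathbf e = \{e_1,\dots,e_{b_1}\}$ of $H_1(M,\R)$ whose parallelepiped $P(\mathbf e)$ contains $B_g(1)$ and nearly realizes $\inf_{P\supset B}\Vol(P)$. Rescaling coordinates by this basis turns the stable norm into one whose unit ball sits inside the unit cube; this is the standard setup in which Gromov's filling/coarea argument produces a cycle, and one reads off that the coefficients $\a_{i_1\dots i_m}$ of $\a(M)$ in the rescaled basis are controlled by $\vol(M,g)/(\text{product of the relevant }l_{e_j})$. Taking the basis subordinate to $M$ in the sense of Section~\ref{ch:isoperimetrique} and using $V[M]=\inf_{\mathbf e\in E(M)}\Vol(\mathbf e)$ then yields $V[M]\le v_{\text{max}}(B_g(1))\cdot \text{(factor bounded by }(\vol(M,g)/\text{something})\text{)}$, and combining with Ingredient 1 gives the result.

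In more detail, the order of steps I would carry out is: (1) rewrite the target inequality using $Res_{b_1}(\z_{sys}(z))=b_1 V(g)$, reducing it to $\big{(}v_{b_1}V[M]\big{)}^m\le \big{(}V(g)\big{)}^m$ combined with the \emph{stable} intersystolic inequality — in fact the cleanest route is to prove directly that $v_{b_1}V[M]\le V(g)^{m/b_1}\,\vol(M,g)^{-1+1}$... so let me instead reduce it to the two displayed inequalities $v_{b_1}\,v_{\text{max}}(B_g(1))\le V(g)$ and $V[M]\cdot V(g)^{?}\le v_{\text{max}}(B_g(1))\cdot \vol(M,g)^{b_1/m}$; (2) prove the convex-geometry inequality $v_{b_1}v_{\text{max}}(B_g(1))\le V(g)$, which is immediate from the definition of $v_{b_1}$ as an infimum of $\V(B)/v_{\text{max}}(B)$ over CSCC's; (3) prove the geometric inequality bounding $V[M]$: choose $\mathbf e$ nearly optimal for $v_{\text{max}}(B_g(1))$, normalize so that $B_g(1)$ lies in the parallelepiped spanned by the $e_i$, apply Gromov's coarea/filling argument (or cite the manifold version of Lemma~\ref{lemme:Burago}-type estimates together with the classical Gromov inequality \eqref{eq:equation.de.base} in the refined multi-class form) to bound each coefficient $\a_{i_1\dots i_m}$ of $\a(M)$ in this basis by $\vol(M,g)$ divided by the appropriate product of stable lengths, deduce that a suitable rescaling of $\mathbf e$ is subordinate to $M$, and use the definition of $V[M]$ as the infimum of $\Vol(\mathbf e)$ over subordinate bases; (4) assemble: multiply the estimates, raise to the appropriate powers, and simplify.

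\textbf{Main obstacle.} The genuinely hard step is (3): producing the quantitative isoperimetric control of the polyvector $\a(M)$ by the Riemannian volume in terms of the chosen basis. This is where the analogue of Gromov's \eqref{eq:equation.de.base} must be run, and it requires care because here we need control of \emph{every} coefficient $\a_{i_1\dots i_m}$ simultaneously with respect to a basis adapted to the stable-norm ball — not just the single systolic estimate. The key technical device is that the stable norm already encodes, via Lemma~\ref{lemme:Burago}, the behavior of the homology length spectrum up to a bounded error, so that asymptotically (which is all that matters for the residue, hence for the final inequality as an equality of homogeneous degree) one may replace lengths of geodesics by stable norms and work purely with the flat geometry of $(H_1(M,\R),\|\cdot\|_{st}^g)$ and the integral class $\a(M)$; the filling argument on $M$ then enters only to bound $\a(M)$ by $\vol(M,g)$ times a constant depending on the geometry of $B_g(1)$, and the sharp constant $v_{b_1}$ emerges exactly because we optimize over circumscribed parallelepipeds. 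The bookkeeping of which powers of $V(g)$ and $\vol(M,g)$ appear where is routine once the right normalization is fixed, but it is the step most prone to error, so I would fix the normalization of $\Vol$ on $H_1(T^{b_1},\R)$ carefully at the outset and track it throughout.
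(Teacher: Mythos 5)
Your overall skeleton is right at the start -- substitute $Res_{b_1}(\z_{sys}(z))=b_1V(g)$ from Theorem \ref{th:zeta.syst.A}, so that the claim becomes the scale-invariant inequality $v_{b_1}V[M]\le V(g)\,(\vol(M,g))^{b_1/m}$ -- but the proposal has a genuine gap exactly where you flag the ``main obstacle''. That inequality is the entire content of the proposition, and you do not prove it: step (3) is only a plan (``apply Gromov's coarea/filling argument \dots to bound each coefficient $\a_{i_1\dots i_m}$ \dots deduce that a suitable rescaling of $\mathbf e$ is subordinate to $M$''), with no statement or proof of the multi-coefficient filling estimate it would require. Moreover the algebraic reduction you propose along the way is incorrect: the split into $\big(v_{b_1}V[M]\big)^m\le \big(V(g)\big)^m$ plus a second inequality cannot work, since $v_{b_1}V[M]\le V(g)$ is not scale-invariant (under $g\mapsto\lambda^2 g$ one has $V(g)\mapsto\lambda^{-b_1}V(g)$), hence false for all metrics whenever $V[M]>0$; only the combined inequality with the factor $(\vol(M,g))^{b_1/m}$ makes sense.

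The paper closes this gap differently: it introduces the asymptotic homology volume of the Abelian cover, $\Omega_H(M,g)=\lim_{t\to\infty}\vol(V_g(q,t),\widetilde g)/t^{b_1}$, uses the identity $\Omega_H(M,g)=V(g)\vol(M,g)$, and then quotes the already-established isoperimetric inequality $v_{b_1}V[M]\le \Omega_H(M,g)\big(\vol(M,g)\big)^{(b_1-m)/m}$ from \cite{Babenko92} (Theorem 5.2). Your sketch, if completed, would amount to re-proving that cited theorem from scratch (the constant $v_{b_1}$ does indeed arise there from circumscribed parallelepipeds, so your intuition about its origin is sound), but as written the decisive estimate is assumed rather than established, and no bridge such as $\Omega_H(M,g)=V(g)\vol(M,g)$ is provided to connect the residue data to the Riemannian volume.
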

Manifolds for which  $\dim(M) = b_1(M)$ are a very interesting
class. The  $m$-torus is the simplest example but they may have
much more complex topology, for instance the degree of the
characteristic map (\ref{eq:application.caracteristique.bis}) may
take any integer value. Proposition \ref{th:isoperimetrique} and
Remark \ref{rk:polyvecteur} combine to yield the
\begin{cor}
Let  $M$ be an orientable manifold of dimension and first Betti
number  $m$. For any Riemannian metric  $g$ on $M$ we have
$$
mv_m|\deg(f)| \leq Res_{b_1}(\z_{sys}(z))\vol(M, g),
$$
where  $f$ is the characteristic map.
\end{cor}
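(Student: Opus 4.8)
The plan is to deduce this directly from Proposition \ref{th:isoperimetrique} by specialising to the case $\dim M = b_1(M) = m$. First I would apply Proposition \ref{th:isoperimetrique} to $M$ with $b_1 = m$, which gives
$$
\left(\frac{m\,v_m\,V[M]}{Res_{b_1}(\z_{sys}(z))}\right)^m \le \big(\vol(M,g)\big)^m .
$$
Next I would invoke Remark \ref{rk:polyvecteur}, part 4: when the dimension equals the first Betti number, the algebraic volume is just the absolute value of the degree of the characteristic map, $V[M] = |\deg(f)|$. Substituting this into the displayed inequality, both sides become $m$-th powers of nonnegative real numbers.

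Then I would take $m$-th roots on both sides — legitimate since the base of each power is $\ge 0$ — to obtain
$$
\frac{m\,v_m\,|\deg(f)|}{Res_{b_1}(\z_{sys}(z))} \le \vol(M,g) ,
$$
and finally clear the denominator. Multiplying through by $Res_{b_1}(\z_{sys}(z))$ yields precisely $m\,v_m\,|\deg(f)| \le Res_{b_1}(\z_{sys}(z))\,\vol(M,g)$, which is the assertion.

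The argument has essentially no obstacle; the only point requiring a word of care is the positivity of the residue, which is needed to justify clearing the denominator in the last step. This is guaranteed by Theorem \ref{th:zeta.syst.A}, part 2: the residue at $z = b_1$ equals $b_1 V(g)$, and $V(g)$, the volume of the unit ball of the stable norm in $H_1(M,\R)$, is strictly positive. (Alternatively, one may simply note that if $\deg(f) = 0$ the inequality is trivially true, so all the content is already in Proposition \ref{th:isoperimetrique}.)
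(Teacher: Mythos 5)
Your argument is correct and is exactly the paper's own route: the corollary is obtained by specialising Proposition \ref{th:isoperimetrique} to the case $\dim M = b_1(M) = m$ and substituting $V[M] = |\deg(f)|$ from Remark \ref{rk:polyvecteur}, part 4, then extracting $m$-th roots and clearing the (positive) residue. Your extra remark on the positivity of $Res_{b_1}(\z_{sys}(z)) = b_1 V(g)$ is a sensible justification of a step the paper leaves implicit.
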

It follows from Proposition   \ref{th:isoperimetrique},  Example
\ref{ex:vol.algebrique}, and the lower estimate  for  $v_{2h}$
given in  \cite{Pelczynski91} that
\begin{cor}
 For any Riemmanian metric $g$ on an  orientable surface $M_h$
of genus  $h$ we have
$$
\frac{2h \pi^h}{h!(2h-1)!!}\bigg{(}h + {1\over 2}\bigg{)}^h \left(
\begin{matrix} h(2h+1) \\ 2h
\end{matrix}
\right)^{-{1\over 2}} \leq Res_{2h}(\z_{sys}(z))\big{(}\vol(M_h,
g)\big{)}^h .
$$
\end{cor}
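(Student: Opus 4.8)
The plan is to specialize Proposition~\ref{th:isoperimetrique} to the case of an orientable surface $M_h$ of genus $h$, so that $m = \dim M_h = 2h$ and $b_1 = b_1(M_h) = 2h$, and then plug in the known lower bounds for the algebraic volume $V[M_h]$ and the convex-geometric constant $v_{2h}$. Concretely, Proposition~\ref{th:isoperimetrique} reads
$$
\big(b_1 v_{b_1} V[M]\big)^m \leq \big(\Res_{b_1}(\z_{sys}(z))\big)^m \big(\vol(M,g)\big)^{b_1},
$$
and with $m = b_1 = 2h$ this becomes, after taking $2h$-th roots,
$$
2h\, v_{2h}\, V[M_h] \leq \Res_{2h}(\z_{sys}(z))\,\big(\vol(M_h,g)\big)^h.
$$
So it only remains to bound the left-hand side below by the explicit quantity in the statement.

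First I would invoke Example~\ref{ex:vol.algebrique}, which gives $V[M_h] \geq \tfrac{1}{(2h-1)!!}$. Next I would insert the asymptotically sharp lower estimate for $v_{2h}$ from \cite{Pelczynski91}; the intended bound is of the shape $v_{2h} \geq \tfrac{\pi^h}{h!}\big(h + \tfrac12\big)^h \binom{h(2h+1)}{2h}^{-1/2}$, i.e. the estimate comparing $v_b$ with $\omega_b$ together with the value $\omega_{2h} = \pi^h/h!$ of the volume of the Euclidean unit ball in $\R^{2h}$ and the precise form of the P{\l}czy{\'n}ski--type lower bound. Substituting both inequalities into the displayed consequence of Proposition~\ref{th:isoperimetrique} yields
$$
\frac{2h\,\pi^h}{h!\,(2h-1)!!}\Big(h + \tfrac12\Big)^h \binom{h(2h+1)}{2h}^{-1/2} \leq \Res_{2h}(\z_{sys}(z))\,\big(\vol(M_h,g)\big)^h,
$$
which is exactly the claimed inequality.

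The only genuine point requiring care is matching the normalization in the lower bound for $v_{2h}$ quoted from \cite{Pelczynski91} with the normalization of $v_b$ fixed in the subsection above (the factor $2^{-b}$ in $v_{\max}(B)$, and the Haar measure convention), since the exponent $h = b/2$ in the binomial factor and the appearance of $h(2h+1) = \binom{2h+1}{2}$ strongly suggest the bound is stated there in terms of the "second symmetric power" dimension. I expect this bookkeeping — tracking the dimension $2h$ versus the index $h$ through the constants $\omega_{2h}$, the double factorial, and the P{\l}czy{\'n}ski estimate — to be the main (and essentially only) obstacle; once the constants are aligned the corollary is a one-line substitution into Proposition~\ref{th:isoperimetrique}. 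No analytic input beyond Theorem~\ref{th:zeta.syst.A} (which guarantees $\Res_{b_1}(\z_{sys}) = b_1 V(g)$ is well-defined) is needed.
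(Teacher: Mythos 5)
Your overall route --- specialize Proposition~\ref{th:isoperimetrique} to $M_h$, then insert the bound $V[M_h]\geq 1/(2h-1)!!$ from Example~\ref{ex:vol.algebrique} and the Pelczy\'nski--Szarek lower estimate for $v_{2h}$ with $\omega_{2h}=\pi^h/h!$ --- is exactly the paper's (the paper gives no more detail than this citation of the three ingredients). However, your specialization step is wrong as written: an orientable surface of genus $h$ has dimension $m=2$, not $m=2h$; the equality $m=b_1$ holds only for the torus $h=1$, and that case is handled by the separate corollary preceding this one. If you really had $m=b_1=2h$, then taking $2h$-th roots of $\big(b_1v_{b_1}V[M]\big)^m\leq\big(Res_{b_1}(\z_{sys}(z))\big)^m\big(\vol(M,g)\big)^{b_1}$ would give $2h\,v_{2h}\,V[M_h]\leq Res_{2h}(\z_{sys}(z))\,\vol(M_h,g)$, with the volume to the first power --- not the inequality you display, so your own deduction does not follow from your premise.

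The correct specialization is $m=2$, $b_1=2h$: Proposition~\ref{th:isoperimetrique} then reads $\big(2h\,v_{2h}\,V[M_h]/Res_{2h}(\z_{sys}(z))\big)^2\leq\big(\vol(M_h,g)\big)^{2h}$, and taking square roots yields precisely the intermediate inequality you wrote down, $2h\,v_{2h}\,V[M_h]\leq Res_{2h}(\z_{sys}(z))\big(\vol(M_h,g)\big)^h$. With that repair, the rest of your argument (the substitution of $V[M_h]\geq 1/(2h-1)!!$ and of the lower bound for $v_{2h}$ from \cite{Pelczynski91}, whose normalization indeed has to be matched against the definition of $v_b$ in the paper, with $h+\tfrac12=(b+1)/2$ and $h(2h+1)=\binom{b+1}{2}$ for $b=2h$) is exactly the paper's one-line proof, so the only genuine flaw is the dimension slip.
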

\noindent For large genera we deduce the following asymptotic
estimate :
$$
\frac{\pi e}{2h} \lesssim
\big{(}Res_{2h}(\z_{sys}(z))\big{)}^{1\over h}\vol(M_h, g), \ \
\text{si} \ \ h \gg 1.
$$

\begin{proof}[Proof of Proposition  \ref{th:isoperimetrique}]
Consider the homology (or Abelian) cover  $\widetilde{M}$ of $M$,
that is, the cover whose transformation group is   $\Gamma =
H_1(M, \Z)/\mbox{Tors}$, so   $\widetilde{M}/\Gamma = M$. Let
$\widetilde{g}$ be the lift to $\widetilde{M}$ of the metric $g$.
Fix a  point $q \in \widetilde{M}$ and denote by
$$
V_g(q, t) = \{x \in \widetilde{M}\big{|} \dist_{\widetilde{g}}(q,
x) \leq t \}
$$
 the metric ball in $\widetilde{M}$ with center  $q$ and radius $t$. It is well-known that the following limit
$$
\Omega_H(M, g) = \mathop{\lim}\limits_{t \rightarrow
\infty}\frac{\vol(V_g(q, t), \widetilde{g})}{t^{b_1}}
$$
exists  and does not depend on the choice of $q$. It is easily
seen that
\begin{equation}\label{eq:volume.asymptotique}
\Omega_H(M, g) = V(g)\vol(M, g),
\end{equation}
where  $V(g)$  is the volume of the unit ball of the stable norm
on $H_1(M, \R)$. The quantity $\Omega_H(M, g)$ is called
asymptotic homology volume.  In \cite{Babenko92} a lower estimate
for  $\Omega_H(M, g)$ was sought.

Set $s = {b_1-m\over m}$. It is easily seen that
$$
\Omega_H(M, g)\big{(}\vol(M, g)\big{)}^s= V(g)\big{(}\vol(M,
g)\big{)}^{b_1\over m}
$$
is invariant under dilatation of the metric: by a factor  $\lambda^2$. It is proved in  \cite{Babenko92},
 Theorem 5.2, that for any metric  $g$  on $M$ we have
$$
v_{b_1}V[M] \leq \Omega_H(M, g)\big{(}\vol(M, g)\big{)}^s.
$$
This inequality and (\ref{eq:volume.asymptotique}) combine to end
the proof.
\end{proof}

\bigskip

{\parindent =0.7truecm

\end{document}